\definecolor{LightCyan}{rgb}{0.88,1,1}
\definecolor{Gray}{gray}{0.9}
\newtheorem{theorem}{Theorem}
\newtheorem{lemma}[theorem]{Lemma}
\newtheorem{corollary}[theorem]{Corollary}
\newtheorem{proposition}[theorem]{Proposition}
\theoremstyle{definition}
\newtheorem{criterion}[theorem]{Criterion}
\newtheorem{remark}[theorem]{Remark}
\newtheorem{definition}[theorem]{Definition}
\DeclareMathOperator\Aut{Aut}
\DeclareMathOperator\Gl {GL}
\DeclareMathOperator\Sym{Sym}
\DeclareMathOperator{\Spe}{Spec}
\newcommand{\lf}{\left\lfloor}
\newcommand{\rf}{\right\rfloor}
\newcommand{\Z}{\mathbb{Z}}
\renewcommand{\O}{{\mathcal{O}}}
\renewcommand{\mod}{\bmod}
\date{\today}
\title{An obstruction to the local lifting problem}
\author[A. Kontogeorgis]{Aristides Kontogeorgis}
\address{Department of Mathematics, National and Kapodistrian  University of Athens
Pane\-pist\-imioupolis, 15784 Athens, Greece}
\email{kontogar@math.uoa.gr}
\author[A. Terezakis]{Alexios Terezakis }
\address{Department of Mathematics, National and Kapodistrian University of Athens\\
Panepistimioupolis, 15784 Athens, Greece}
\email{aleksistere@math.uoa.gr}
\date \today
\newcommand{\aprod}{\mathop{\operator@font \hbox{\Large$\ast$}}}
\begin{document}

\keywords{Branched cover, Lifting of representations, KGB-obstruction  Generalized Oort conjecture, Galois group, metacyclic group, dihedral group}

\subjclass{14H37, 12F10; 11G20, 13B05, 13F35, 14H30}

\begin{abstract}
We are investigating the 
lifting problem for local actions
involving semidirect products of a cyclic $p$-group with a cyclic group prime to $p$, where $p$ represents the characteristic of the special fiber. We establish a criterion based on the Harbater-Katz-Gabber compactification of local actions, enabling us to determine whether a given local action can be lifted or not. Specifically, in the case of the dihedral group, we present an example of a local dihedral action that cannot be lifted. This instance provides a more potent obstruction than the KGB obstruction.

\bigskip

 Nous étudions le problème de relèvement des actions locales des produits semi-directs d'un groupe cyclique $p$ par un groupe cyclique d'ordre premier avec $p$, où $p$ est la caractéristique de la fibre spéciale. Nous  obtenons un critère basé sur la compactification des actions locales de Harbater-Katz-Gabber, qui nous permet de décider si une action locale peut être relevée ou non. En particulier, dans le cas du groupe diédral, nous donnons un exemple d'action locale diédrale qui ne peut pas être relevée, offrant ainsi une obstruction plus forte que l'obstruction KGB.
 \end{abstract}
\maketitle


\section{Introduction}

Let $G$ be a finite group, $k$ an algebraically closed field of characteristic $p>0$ and consider the homomorphism  
\[
\rho: G \hookrightarrow \Aut(k[[t]]), 
\]
which will be called {\em a local $G$-action}. Let $W(k)$ denote the ring of Witt vectors of $k$. 
The local lifting problem addresses the question:
Does there exist an extension $\Lambda/W(k)$, that is $\Lambda$ is an integrally closed domain contained in a field extension of $\mathrm{Frac}(W(k))$, and a representation 
\[
\tilde{\rho}: G \hookrightarrow \Aut(\Lambda[[T]]),
\]
such that if $t$ is the reduction of $T$, then the action of $G$ on $\Lambda[[T]]$
reduces to the action of $G$ on $k[[t]]$?
If the answer to the above question is affirmative, then we say that the $G$-action lifts to characteristic zero. 
A group $G$ for which every local $G$-action on $k[[t]]$ lifts to characteristic zero is called {\em a local Oort group for $k$}.

Following an examination of specific obstructions, such as the Bertin obstruction, the KGB obstruction, and the Hurwitz tree obstruction, it has been established that the potential candidates for local Oort groups in characteristic $p$ are limited to the following: 
\begin{enumerate}
\item Cyclic groups;
\item Dihedral groups $D_{p^h}$ of order $2p^h$;
\item The alternating group $A_4$.
\end{enumerate}

The Oort conjecture states that every cyclic group $C_q$ of order $q=p^h$ {is a local Oort group}. This conjecture was recently proven by F. Pop \cite{MR3194816} using the work of A. Obus and S. Wewers \cite{ObusWewers}. A. Obus proved that $A_4$ is a local Oort group in \cite{MR3556796} and this was also known to F. Pop as well as I. Bouw and S. Wewers \cite{MR2254623}. Dihedral groups $D_p$ are known to be local Oort by the work of I. Bouw and S. Wewers for $p$ odd \cite{MR2254623} and by the work of G. Pagot \cite{PagotPhd}. Several cases of dihedral groups $D_{p^h}$ for small $p^h$ have been studied by A. Obus \cite{Obus2017} and H. Dang, S. Das, K. Karagiannis, A. Obus, V. Thatte \cite{MR4450616}, while the $D_4$ was studied by B. Weaver \cite{MR3874854}. For further details on the lifting problem, refer to \cite{MR2441248}, \cite{MR2919977}, \cite{MR3591155}, \cite{Obus12}. 

Perhaps the most significant of the currently known obstructions is the KGB obstruction \cite{MR2919977}. It was conjectured that 
if the $p$-Sylow subgroup of $G$ is cyclic, then this is the sole obstruction for the local lifting problem, see \cite{Obus12}, \cite{Obus2017}. 
In particular, the KGB obstruction for the dihedral group $D_q$ is known to vanish, and the so called ``generalized Oort conjecture'' asserts that the local action of $D_q$ always lifts {for $q$-odd}. 

In this article, we will provide a new obstruction for the lifting problem of a $C_q \rtimes C_m$-action and in particular for the group $D_q$. We would like to emphasize that in contrast to the KGB obstruction, which vanishes for the dihedral groups of order $2p^h$, our obstruction does not.
Using this criterion we provide in section \ref{sec:no-lift} a counterexample to the generalized Oort conjecture, 
by proving 
the HKG-cover corresponding to $D_{125}$, with a selection of lower jumps $9,189,4689$ does not lift.

We use the Harbater-Katz-Gabber compactification (referred to as HKG) as a means to construct complete curves from local actions. This approach equips us with a diverse set of tools stemming from the theory of complete curves, thereby allowing us to convert the local action and its deformations into representations of linear groups that act on the differentials of the HKG-curve. The foundational tools for this purpose are detailed in our article \cite{2101.11084}, where we have compiled various insights into the interrelation between lifting local actions, lifting curves, and lifting linear representations.

To elaborate further, let us delve into the specifics. Consider a local action $\rho: G \rightarrow \Aut k[[t]]$, where the group $G$ is $C_q \rtimes C_m$.
The Harbater-Katz-Gabber compactification theorem asserts that there is a Galois cover
$X\rightarrow \mathbb{P}^1$ ramified wildly and completely only at one point $P$ of $X$ with Galois group 
$G=\mathrm{Gal}(X/\mathbb{P}^1)$ 
and tamely on a different point $P'$ with ramification group $C_m$, so that the action of $G$ on the completed local ring $\O_{X,P}$ coincides with the original 
action of $G$ on $k[[t]]$. Notably, it is established that the local action lifts if and only if the corresponding HKG-cover undergoes lifting as well.

In particular, we have proved that in order to lift a subgroup $G \subset \Aut(X)$, the representation $\rho:G\rightarrow \Gl H^0(X,\Omega_X)$ should be lifted to characteristic zero and also the lifting should be compatible with the deformation of the curve. More precisely, in \cite{2101.11084} we have proved 
the following relative version of Petri's theorem.
\begin{proposition}
\label{red-new-prot}
Let $f_1,\ldots,f_r \in S:=\Sym H^0(X,\Omega_X)=k[\omega_1,\ldots,\omega_g]$ be quadratic polynomials which generate the canonical ideal $I_{X}$ of a curve $X$ defined over an algebraic closed field $k$. Any deformation 
$\mathcal{X}_A$ is given by quadratic polynomials $\tilde{f}_1,\ldots,\tilde{f}_r \in \Sym H^0(\mathcal{X}_A,\Omega_{\mathcal{X}_A/A})=A[W_1,\ldots,W_g]$, which reduce to $f_1,\ldots,f_r$ modulo the maximal ideal 
$\mathfrak{m}_A$ of $A$. 
\end{proposition}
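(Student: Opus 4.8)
The plan is to lift the generation statement from the special fibre $X$ to the family $\mathcal{X}_A$ by combining cohomology-and-base-change with Nakayama's lemma: this is exactly the classical proof of Petri's theorem, carried out relatively over $A$. I write $\pi\colon\mathcal{X}_A\to\Spe A$ for the deformation, $X$ for its special fibre, and $\mathfrak{m}_A$ for the maximal ideal of $A$, which (as usual in deformation theory) I take to be Artinian local, the complete local case following by a limit argument.

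First I would establish the cohomology in families. Since the canonical ideal of $X$ is generated by quadrics, $X$ is non-hyperelliptic, so $\Omega_{\mathcal{X}_a}^{\otimes n}$ has degree $n(2g-2)$ on each fibre; hence $H^1(\mathcal{X}_a,\Omega_{\mathcal{X}_a}^{\otimes n})=0$ for $n\ge 2$, while $\dim H^1(\mathcal{X}_a,\Omega_{\mathcal{X}_a})=\dim H^0(\mathcal{X}_a,\O_{\mathcal{X}_a})=1$ and $\dim H^0(\mathcal{X}_a,\Omega_{\mathcal{X}_a})=g$ for every $a\in\Spe A$. By the theorem on cohomology and base change each $\pi_*\Omega_{\mathcal{X}_A/A}^{\otimes n}$ is then a free $A$-module of the expected rank whose formation commutes with base change. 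In particular $H^0(\mathcal{X}_A,\Omega_{\mathcal{X}_A/A})$ is free of rank $g$, and a basis $W_1,\dots,W_g$ reducing modulo $\mathfrak{m}_A$ to $\omega_1,\dots,\omega_g$ (again a basis, by Nakayama) identifies $\mathrm{Sym}\,H^0(\mathcal{X}_A,\Omega_{\mathcal{X}_A/A})$ with $A[W_1,\dots,W_g]$. Moreover the relative canonical ring $R_A:=\bigoplus_{n\ge 0}H^0(\mathcal{X}_A,\Omega_{\mathcal{X}_A/A}^{\otimes n})$ is a flat graded $A$-algebra with $R_A\otimes_A k$ equal to the canonical ring $R_X$ of $X$, the latter being generated in degree one by Max Noether's theorem.

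Next I would control the relative canonical ideal $I_{\mathcal{X}_A}:=\ker\big(A[W_1,\dots,W_g]\twoheadrightarrow R_A\big)$. Tensoring $0\to I_{\mathcal{X}_A}\to A[W_1,\dots,W_g]\to R_A\to 0$ with $k$ and using $\mathrm{Tor}_1^A(R_A,k)=0$ (flatness of $R_A$) gives a graded isomorphism $I_{\mathcal{X}_A}\otimes_A k\cong I_X$, so each $(I_{\mathcal{X}_A})_n$ is $A$-free and reduces to $(I_X)_n$. In particular $(I_{\mathcal{X}_A})_2$ is free of rank $r$, and any $\tilde f_1,\dots,\tilde f_r\in (I_{\mathcal{X}_A})_2\subset A[W_1,\dots,W_g]$ reducing to $f_1,\dots,f_r$ form, by Nakayama, an $A$-basis of $(I_{\mathcal{X}_A})_2$. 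These $\tilde f_i$ are the quadratic polynomials sought; what remains is to show they generate the whole ideal $I_{\mathcal{X}_A}$, i.e.\ that no higher-degree relations are needed in the family. For this, set $J=(\tilde f_1,\dots,\tilde f_r)\subseteq I_{\mathcal{X}_A}$ and $Q=I_{\mathcal{X}_A}/J$; each graded piece $Q_n$ is a finitely generated $A$-module, and under $I_{\mathcal{X}_A}\otimes_A k\cong I_X$ the image of $J\otimes_A k$ is the submodule of $I_X$ generated by the $f_i$, which by hypothesis is all of $I_X$. Hence $Q\otimes_A k=0$, so $Q_n\otimes_A k=0$ for all $n$, and Nakayama applied degreewise forces $Q=0$, i.e.\ $I_{\mathcal{X}_A}=(\tilde f_1,\dots,\tilde f_r)$.

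The step I expect to be the main obstacle is the first one: ensuring that the dimensions of all the spaces $H^0(\mathcal{X}_a,\Omega_{\mathcal{X}_a}^{\otimes n})$ remain constant across $\Spe A$, so that $R_A$ is genuinely $A$-flat with the expected special fibre — a kind of ``projective normality in families''. Once that cohomological input is secured, Petri's theorem on $X$ together with Nakayama's lemma yields the result with no hypothesis on the genus beyond the standing assumption that $I_X$ is generated by quadrics; in the complete local case one applies the Artinian argument to each $A/\mathfrak{m}_A^N$ and passes to the limit.
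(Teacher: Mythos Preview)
The paper does not actually prove this proposition; it is quoted from the authors' companion paper \cite{2101.11084}, so there is no in-paper argument to compare against directly. Your approach---cohomology and base change to get freeness of each $\pi_*\Omega_{\mathcal{X}_A/A}^{\otimes n}$ together with degreewise Nakayama to lift Max Noether and Petri from the special fibre---is the standard one and is almost certainly what appears in that reference.

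Your argument is correct. One small point worth making explicit: you write $A[W_1,\dots,W_g]\twoheadrightarrow R_A$ as a surjection before you have justified it. This follows by the same mechanism you use later (the map is surjective modulo $\mathfrak m_A$ by Max Noether on $X$, each graded piece of the target is a finitely generated $A$-module, so Nakayama gives surjectivity over $A$), but it deserves a sentence, since without it the identification $I_{\mathcal X_A}\otimes_A k\cong I_X$ would not be immediate. With that addition the proof is complete.
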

Additionally, we have provided the following liftability criterion:
\begin{theorem}
\label{th:main-lift}
Consider an epimorphism $R\rightarrow k \rightarrow 0$ of local Artin rings.
Let $X$ be a curve which is is canonically embedded in $\mathbb{P}^{g-1}_k$ and the canonical ideal is generated by quadratic polynomials; and acted on by the group $G$. 
The curve $X \rightarrow \Spe(k)$ can be lifted to a family $\mathcal{X} \rightarrow \Spe(R) \in D_{\Gl }(R)$ along with the G-action, if and only if the representation $\rho_k :G \rightarrow \Gl _g(k)=\Gl (H^0(X,\Omega_X))$ lifts to a representation  $\rho_{R}:G \rightarrow \Gl _g(R)= 
\Gl (H^0(\mathcal{X},\Omega_{\mathcal{X}/R}))$  and moreover the lift of the canonical ideal is left invariant by the action of $\rho_R(G)$. 
\end{theorem}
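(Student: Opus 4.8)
The plan is to reduce the liftability of the $G$-curve $X$ to the purely representation-theoretic data carried by the canonical embedding, using the relative Petri theorem (Proposition~\ref{red-new-prot}) as the bridge. The forward direction is essentially tautological: if $X \to \Spe(k)$ lifts to $\mathcal{X} \to \Spe(R)$ together with the $G$-action and $\mathcal{X} \to \Spe(R)$ lies in $D_{\mathrm{gl}}(R)$, then by flatness $H^0(\mathcal{X}, \Omega_{\mathcal{X}/R})$ is a free $R$-module of rank $g$ reducing to $H^0(X,\Omega_X)$ modulo $\m_R$, and the $G$-action on $\mathcal{X}$ induces an $R$-linear action $\rho_R$ on it whose reduction is $\rho_k$. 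The relative canonical embedding then presents $\mathcal{X} \hookrightarrow \mathbb{P}^{g-1}_R$, its homogeneous ideal $I_{\mathcal{X}}$ is generated in degree $2$ by Proposition~\ref{red-new-prot}, reduces to $I_X$, and is stable under $\rho_R(G)$ because the embedding is $G$-equivariant. So one just has to unwind the definitions and invoke the freeness and base-change statements for $\Omega_{\mathcal{X}/R}$.

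The substantive direction is the converse. Suppose we are given a lift $\rho_R : G \to \mathrm{GL}_g(R)$ of $\rho_k$ and a $\rho_R(G)$-invariant lift of the canonical ideal. I would first use Proposition~\ref{red-new-prot} in the opposite sense: choosing quadratic generators $f_1,\dots,f_r$ of $I_X$ and lifting them (as the hypothesis provides) to $\rho_R(G)$-invariant quadratics $\tilde f_1,\dots,\tilde f_r \in R[W_1,\dots,W_g]$, one defines the closed subscheme $\mathcal{X} \subset \mathbb{P}^{g-1}_R$ cut out by them. The key claim is that $\mathcal{X} \to \Spe(R)$ is flat with special fibre $X$ and that its formation is compatible with the $G$-action. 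Flatness over the Artinian base $R$ should follow from a local criterion: since $R \to k$ has nilpotent kernel, lifting the equations without changing the Hilbert polynomial — which is forced because the $\tilde f_i$ are quadratic and $X$ is projectively normal with known Hilbert function — gives flatness, and one checks that $\mathcal{X}$ is a smooth relative curve deforming $X$, hence an element of $D_{\mathrm{gl}}(R)$. The $G$-action on $\mathbb{P}^{g-1}_R$ is the projectivization of $\rho_R$; since the $\tilde f_i$ generate a $\rho_R(G)$-stable ideal, this action restricts to $\mathcal{X}$ and, by construction, reduces to the given $G$-action on $X$.

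The main obstacle I anticipate is verifying that $\mathcal{X}$ so constructed is actually \emph{smooth} over $R$ (not merely flat with smooth special fibre — which for Artinian $R$ already gives smoothness, but one must be careful that the relative dualizing sheaf of $\mathcal{X}/R$ is genuinely $\cO_{\mathcal{X}}(1)$, i.e.\ that the embedding we reconstructed is the relative \emph{canonical} embedding and not some twist), and relatedly that $H^0(\mathcal{X}, \Omega_{\mathcal{X}/R})$ is recovered as the degree-one piece with the correct $G$-action $\rho_R$. This is where projective normality of the canonical curve and the degree-$2$ generation of $I_X$ are used: they guarantee that the homogeneous coordinate ring of $\mathcal{X}$ is the lift of that of $X$, so that $\mathrm{Sym}^1$ is free of rank $g$ and carries $\rho_R$, and Grothendieck–Serre duality identifies $\cO_{\mathcal{X}}(1)$ with $\omega_{\mathcal{X}/R}$ by reduction modulo $\m_R$ together with the fact that a line bundle on $\mathcal{X}$ trivial on the special fibre relative to $\omega_{\mathcal{X}/R}$ is trivial. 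Once smoothness and the canonical identification are in place, the two directions match up and the equivalence follows; I would also remark that the genus-$2$, hyperelliptic, and trigonal cases (where $I_X$ is not generated by quadrics) are excluded exactly by the standing hypothesis, so no separate argument is needed there.
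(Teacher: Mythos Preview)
The paper does not actually prove this theorem: it is quoted in the Introduction as a result established in the authors' earlier work \cite{2101.11084}, with no argument reproduced here. So there is no proof in the present paper against which to compare your proposal.

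That said, your outline follows the natural strategy one would expect for such a statement, and it is consistent with how the theorem is \emph{used} later in the paper (see Section~\ref{sec:invariant-subspaces}, where the authors take a $C_q$-invariant lift of the canonical ideal and average it to produce a $C_q\rtimes C_m$-invariant one). The forward direction is, as you say, essentially unwinding definitions plus base change for $\Omega_{\mathcal X/R}$. For the converse, your concern about verifying that the reconstructed embedding is genuinely the relative canonical one (i.e.\ that $\mathcal O_{\mathcal X}(1)\cong\omega_{\mathcal X/R}$) is the right place to focus; over an Artin local base this reduces to the fact that a line bundle on a proper flat scheme which is trivial on the special fibre is trivial, together with flatness of the lifted ideal controlled by the Hilbert function, exactly as you indicate. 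Whether this matches the detailed argument in \cite{2101.11084} cannot be determined from the present paper.
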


In section \ref{sec:HKG-covers} we prove that the canonical ideal of the HKG-cover is  generated by quadratic polynomials, therefore theorem \ref{th:main-lift} can be applied. In order to decide whether a linear representation of $G=C_q \rtimes C_m$ can be lifted, we will use the following criterion for the lifting of the linear representation, based on the decomposition of a $k[G]$-module into indecomposable summands. We begin  by describing the indecomposable 
$k[G]$-modules for the group $G=C_q \rtimes C_m$:
\begin{proposition}
Suppose that the group  $G=C_q \rtimes C_m$ is represented in terms of generators $\sigma,\tau$ and relations as follows:
\[
G=\langle \sigma,\tau | \tau^q=1, \sigma^m=1, \sigma \tau \sigma^{-1}= \tau^{\alpha}\rangle,
\]
for some  $\alpha\in \mathbb{N},
 1\leq \alpha \leq p^h-1, (\alpha, p)=1$.
Every indecomposable $k[G]$-module has dimension $1\leq \kappa \leq q$ and is of the form 
$
  V_{\alpha}(\lambda,\kappa),
$
where the underlying space of $V_{\alpha}(\lambda,\kappa)$ has the set of elements $\{(\tau-1)^\nu e, \nu=0,\ldots,\kappa-1\}$ as a basis for some $e\in V_\alpha(\lambda,\kappa)$, and the action of $\sigma$ on $e$ is given by $\sigma e= \zeta_m^\lambda e$, for a fixed primitive $m$-th root of unity. 
\end{proposition}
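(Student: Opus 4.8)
The plan is to identify $k[G]$ as a \emph{Nakayama} (generalized uniserial) algebra and then read off all its indecomposable modules from the structure theory of such algebras.

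First I would pin down the radical. Set $t:=\tau-1$, so $k[C_q]=k[t]/(t^q)$ is a local uniserial ring whose only ideals are the powers $(t^{j})$, $0\le j\le q$ (here $q=p^{h}$, and $(\tau-1)^{q}=\tau^{q}-1=0$ in characteristic $p$). Since $C_q$ is normal in $G$, conjugation preserves its augmentation ideal $I:=(t)$, so $I\,k[G]=k[G]\,I$ is a two-sided ideal; it is nilpotent because $(I\,k[G])^{q}=I^{q}k[G]=0$, and $k[G]/I\,k[G]\cong k[C_m]$ is semisimple as $(m,p)=1$. Hence $\mathrm{rad}(k[G])=I\,k[G]=(\tau-1)\,k[G]$, and $\mathrm{rad}(V)=(\tau-1)V$ for every $k[G]$-module $V$.

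Next I would split $k[G]$ into blocks. As $k$ is algebraically closed with $(m,p)=1$ it contains a primitive $m$-th root of unity $\zeta_m$, and $\epsilon_\lambda:=\frac1m\sum_{j=0}^{m-1}\zeta_m^{-\lambda j}\sigma^{j}\in k[C_m]\subseteq k[G]$, $\lambda=0,\ldots,m-1$, are orthogonal idempotents with $\sum_\lambda\epsilon_\lambda=1$ and $\sigma\epsilon_\lambda=\zeta_m^{\lambda}\epsilon_\lambda$. Thus $k[G]=\bigoplus_{\lambda}P_\lambda$ with $P_\lambda:=k[G]\epsilon_\lambda$. From $\sigma^{j}\epsilon_\lambda=\zeta_m^{\lambda j}\epsilon_\lambda$ and the $k$-basis $\{t^{i}\sigma^{j}\}$ of $k[G]$ one gets $P_\lambda=k[C_q]\epsilon_\lambda=\mathrm{span}_k\{t^{i}\epsilon_\lambda:0\le i\le q-1\}$, a cyclic $k[C_q]$-module of dimension $q$, hence free of rank one, i.e.\ a single Jordan block. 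Consequently the $k[C_q]$-submodules of $P_\lambda$ are exactly the $t^{j}P_\lambda$, and since every $k[G]$-submodule is a $k[C_q]$-submodule, $P_\lambda$ is uniserial with radical series $P_\lambda\supsetneq tP_\lambda\supsetneq\cdots\supsetneq t^{q-1}P_\lambda\supsetneq 0$. The $P_\lambda$ have pairwise distinct simple heads, hence are non-isomorphic; and $k[G]$, being a group algebra, is self-injective, so its indecomposable projectives and injectives coincide. Therefore all indecomposable projective-injective $k[G]$-modules are uniserial, i.e.\ $k[G]$ is a Nakayama algebra.

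Finally I would invoke the classification of modules over a Nakayama algebra: every indecomposable $k[G]$-module is isomorphic to $P_\lambda/\mathrm{rad}^{\kappa}P_\lambda=P_\lambda/t^{\kappa}P_\lambda$ for a unique $\lambda\in\{0,\ldots,m-1\}$ and a unique $\kappa$ with $1\le\kappa\le q$, hence is $\kappa$-dimensional. It then remains to match this quotient with the module in the statement: taking $e$ to be the image of $\epsilon_\lambda$, the classes $(\tau-1)^{\nu}e=t^{\nu}e$, $0\le\nu\le\kappa-1$, form a $k$-basis, $\tau-1$ acts by the shift $t^{\nu}e\mapsto t^{\nu+1}e$, and $\sigma e=\zeta_m^{\lambda}e$ (which, together with $\sigma(1+t)\sigma^{-1}=(1+t)^{\alpha}$, determines the whole $\sigma$-action); this is precisely $V_\alpha(\lambda,\kappa)$, and the relations $\sigma^m=1$, $\sigma\tau\sigma^{-1}=\tau^\alpha$ hold on it because $\zeta_m^{m}=1$ and $1+t\mapsto(1+t)^{\alpha}$, so these modules are genuinely well defined. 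The crux of the argument, and the only step that is not formal bookkeeping, is the uniseriality of the $P_\lambda$: controlling every $G$-stable subspace through the far simpler submodule lattice of $k[C_q]$; once that is in place, the passage to the general indecomposable module is the standard Nakayama structure theorem.
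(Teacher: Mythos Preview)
Your argument is correct and complete. The paper itself does not give a proof in the text; it simply refers to section~3 of the authors' companion article, so there is no in-paper argument to compare against. Your route---showing that $k[G]$ is a Nakayama algebra by decomposing it via the primitive idempotents of $k[C_m]$, and observing that each indecomposable projective $P_\lambda$ restricts to a free rank-one $k[C_q]$-module so that its submodule lattice is the chain $\{t^jP_\lambda\}$---is the standard and natural approach to this classification. The one point worth making explicit (you use it implicitly when identifying the radical series) is that each $t^jP_\lambda$ is indeed a $k[G]$-submodule, which follows because $\mathrm{rad}(k[G])=tk[G]$ is two-sided; but your uniseriality argument itself only needs that the $k[G]$-submodule lattice embeds in the $k[C_q]$-submodule lattice, which you state correctly. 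The payoff of your presentation is that it is entirely self-contained and makes transparent both the count ($qm$ indecomposables) and their uniserial structure, rather than deferring to an external reference.
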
 
\begin{proof}
A proof can be found in \cite[sec. 3]{MR4779377}. Notice also that $(\tau-1)^{\kappa}e=0$.
\end{proof}

Notice that in section \ref{sec:HolomorphicDifferentialSpecialFibre} we will give an alternative description of the indecomposable $k[G]$-modules, namely the $U_{\ell,\mu}$ notation, which is compatible with the results of \cite{MR4130074}.
\begin{remark}
In the article \cite{MR4779377} of the authors, the $V_{\alpha}(\lambda,\kappa)$ notation is used. In this article we will need the Galois module structure of the space of homolomorphic differentials of a curve and we will employ the results of 
\cite{MR4130074}, where the $U_{\ell,\mu}$ notation is used. These modules will be defined in section \ref{sec:HolomorphicDifferentialSpecialFibre}, notice that 
$V_{\alpha}(\lambda,\kappa)=U_{(\lambda+a_0(\kappa-1)) \mod m,\kappa}$, where $\alpha = \zeta_m^{a_0}$, see lemma \ref{comparisonIndec}.
\end{remark}



\begin{theorem}
\label{cond-lift}
Consider a $k[G]$-module $M$ which is decomposed as a direct sum 
\[
  M= V_{\alpha}(\varepsilon_1,\kappa_1) \oplus \cdots \oplus V_{\alpha}(\varepsilon_s,\kappa_s). 
\]
The module lifts to an $R[G]$-module if and only if 
the set $\{1,\ldots,s\}$ can be written as a disjoint union of sets $I_\nu$, $1\leq \nu \leq t$ so that

\begin{enumerate}[label=\alph*.]
\item \label{cond-lifta}$\sum_{\mu \in I_\nu} \kappa_\mu \leq q$, for all $1\leq \nu \leq t$. 
\item \label{cond-liftb}$\sum_{\mu \in I_\nu} \kappa_\mu\equiv a \mod m$  for all $1\leq \nu \leq t$, where $a\in \{0,1\}$. 
\item \label{cond-liftc} 
For each $\nu$, $1\leq \nu \leq t$ there is an enumeration $\sigma: \{1,\ldots,\#I_\nu\} \rightarrow I_\nu\subset \{1,..,s\}$,
such that 
\[
  \varepsilon_{\sigma(2)}= \varepsilon_{\sigma(1)} \alpha^{\kappa_{\sigma(1)}},
  \varepsilon_{\sigma(3)}= \varepsilon_{\sigma(2)} \alpha^{\kappa_{\sigma(2)}},\ldots,
  \varepsilon_{\sigma(s)}= \varepsilon_{\sigma(s-1)} \alpha^{\kappa_{\sigma(s-1)}}.
\]
\end{enumerate}
Condition \ref{cond-liftb}, with $a=1$ happens only if the lifted $C_q$-action in the generic fibre has an eigenvalue equal to $1$ for the generator $\tau$ of $C_q$.   
\end{theorem}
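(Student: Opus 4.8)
The plan is to reduce the assertion to a single ``block'' via the Krull--Schmidt theorem, and then prove the two implications separately: sufficiency of \ref{cond-lifta}--\ref{cond-liftc} by an explicit construction, and necessity by analysing the $C_q$-lattice underlying an arbitrary lift together with the $C_m$-action. By Theorem~\ref{th:main-lift} and the reductions already available in the cited work, lifting is in the end a question over a complete discrete valuation ring, so I may assume $R$ is such a ring with residue field $k$ and fraction field $F$ of characteristic $0$, and, after a finite base change, that $F$ (hence $R$, being integrally closed) contains $\zeta_q$ and $\zeta_m$ and is sufficiently ramified. Since $R$ is complete local, Krull--Schmidt holds for finitely generated $R[G]$-lattices and for finite-dimensional $k[G]$-modules; thus if $M$ lifts to $\tilde M=\bigoplus_\nu\tilde M_\nu$ with the $\tilde M_\nu$ indecomposable, then reducing modulo $\m_R$ and invoking uniqueness of the decomposition of $M$ shows that each $\tilde M_\nu\otimes_R k$ is the sum of those $V_\alpha(\epsilon_\mu,\kappa_\mu)$ with $\mu$ in a set $I_\nu$, and the $I_\nu$ partition $\{1,\dots,s\}$; conversely one adds lifts of the pieces $N_I:=\bigoplus_{\mu\in I}V_\alpha(\epsilon_\mu,\kappa_\mu)$. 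Hence it suffices to prove that, for a single index set $I$, the module $N_I$ lifts if and only if $I$ satisfies \ref{cond-lifta}, \ref{cond-liftb} and \ref{cond-liftc}.

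For sufficiency I would argue constructively. Put $K=\sum_{\mu\in I}\kappa_\mu$. Since $(\alpha,p)=1$, $\alpha^m\equiv1\pmod{q}$ and $(m,p)=1$, the order of $\alpha$ modulo $p^j$ is the same for all $1\le j\le h$ (no $p$-part can appear), so the group $\langle\alpha\rangle$ acting on $\Z/q$ by multiplication has all nonzero orbits of this common length $m$ and $\{0\}$ as its unique fixed orbit; therefore \ref{cond-lifta} and \ref{cond-liftb} together are exactly the statement that there is an $\langle\alpha\rangle$-stable $T\subseteq\Z/q$ with $\#T=K$, and such a $T$ contains $0$ precisely when $K\equiv1\pmod{m}$. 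Set $g(x)=\prod_{i\in T}(x-\zeta_q^{\,i})\in R[x]$ and $L:=R[x]/(g(x))$, an $R$-free module of rank $K$ on which $\tau$ acts by multiplication by $x$ (so $\tau^q=1$ because $g\mid x^q-1$); since $T$ is $\langle\alpha\rangle$-stable, the automorphism of $R[x]/(x^q-1)$ given by $x\mapsto x^\alpha$ descends to an automorphism $\phi$ of $L$, and for any $\epsilon$ the $R$-linear map $\sigma:=\zeta_m^{\,\epsilon}\phi$ satisfies $\sigma^m=1$ and $\sigma\tau\sigma^{-1}=\tau^\alpha$, making $L$ an $R[G]$-lattice whose special fibre is $k[x]/(x-1)^K\cong V_\alpha(\epsilon,K)$ (one computes the $\sigma$-eigenvalue $\zeta_m^{\,\epsilon+a_0\nu}$ on the $\nu$-th graded piece of the $(\tau-1)$-filtration). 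Now order $I$ as $\sigma(1),\dots,\sigma(\#I)$ as in \ref{cond-liftc}, take $\epsilon=\epsilon_{\sigma(1)}$, and let $s_\mu$ be the partial sums of the $\kappa$'s in this order; the chain relations of \ref{cond-liftc} say precisely that $N_I$ is the associated graded of $V_\alpha(\epsilon,K)$ for the $G$-stable filtration $\bar L\supseteq(\tau-1)^{s_1}\bar L\supseteq\cdots\supseteq(\tau-1)^{s_{\#I-1}}\bar L\supseteq0$, and a suitable $R[G]$-lattice in $L\otimes F$, obtained from $L$ by a chain of elementary modifications governed by this filtration (here one uses that $F$ is ramified enough for the valuations of the $\zeta_q^{\,i}-1$ and of their differences to leave room), has special fibre $N_I$; this is the desired lift.

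For necessity, let $\tilde N$ be a lift of $N_I$ which is indecomposable as an $R[G]$-lattice and restrict to $C_q$: then $\tilde N|_{C_q}$ is an $R[C_q]$-lattice with special fibre $\bigoplus_{\mu\in I}J_{\kappa_\mu}$, where $J_\kappa$ is the $\kappa\times\kappa$ Jordan block with eigenvalue $1$, while over $F$ the operator $\tau$ acts semisimply with eigenvalues a multiset $S$ of $q$-th roots of unity, $\#S=K$, and $C_m$ permutes the $\tau$-eigenspaces through $\zeta_q^{\,i}\mapsto\zeta_q^{\,i\alpha^{-1}}$. The crux is that the $k[G]$-summands $V_\alpha(\epsilon_\mu,\kappa_\mu)$ carry mutually incompatible $\sigma$-data attached to $(\epsilon_\mu,\kappa_\mu)$, so that---together with indecomposability of $\tilde N$ and the permutation action above---$S$ must be multiplicity-free and $\langle\alpha\rangle$-stable. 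Multiplicity-freeness gives $K\le q$, which is \ref{cond-lifta}; the orbit structure of $\langle\alpha\rangle$ on $\Z/q$ gives $K\equiv0$ or $1\pmod{m}$, which is \ref{cond-liftb}, and when $K\equiv1$ it forces $0\in S$, that is, $\tau$ has an eigenvalue equal to $1$ on $\tilde N\otimes_R F$---the last assertion of the theorem. Finally, reducing the $\sigma$-action along the $(\tau-1)$-filtration of $\tilde N|_{C_q}$ and matching the resulting eigenvalues with those of the $V_\alpha(\epsilon_\mu,\kappa_\mu)$ produces the enumeration and the chain relations of \ref{cond-liftc}.

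The step I expect to be the main obstacle is the control of lattice reductions that sits behind both implications, and most sharply the passage, in the necessity argument, from ``the special fibre is $\bigoplus_\mu V_\alpha(\epsilon_\mu,\kappa_\mu)$'' to the multiplicity-freeness and $\langle\alpha\rangle$-stability of $S$: this amounts to determining which $R[C_q]$-sublattices of (twists of) the regular representation can occur inside a $G$-indecomposable lattice. For $q=p$ this is essentially the Heller--Reiner classification of $\Z_p[C_p]$-lattices; for general $q=p^h$ one must feed Clifford theory for the normal subgroup $C_q$ into the integrality constraints on the lattice, and it is precisely here that the genuine bound \ref{cond-lifta}, $\sum_{\mu\in I}\kappa_\mu\le q$, enters, rather than only the weaker bound $\max_\mu\kappa_\mu\le q$ that would already follow from $(\tau-1)^q=0$ on the special fibre.
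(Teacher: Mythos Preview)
The paper does not actually prove this theorem; its ``proof'' consists solely of a reference to the authors' companion article \cite{https://doi.org/10.48550/arxiv.2301.01032}. There is therefore no in-paper argument to compare your proposal against, and what follows are remarks on your sketch on its own merits.

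Your global architecture is natural and almost certainly close to what the cited article does: reduce via Krull--Schmidt to a single block $N_I$, construct a lift from an $\langle\alpha\rangle$-stable subset $T\subset\Z/q$ with $\#T=K$ for sufficiency, and analyse the generic fibre of an indecomposable lift for necessity. The orbit count (all nonzero $\langle\alpha\rangle$-orbits on $\Z/q$ have size exactly $m$) is correct under the hypothesis, implicit in all of the paper's applications, that the $C_m$-action on $C_q$ is faithful.

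Two steps, however, are genuine gaps rather than routine verifications. In the sufficiency direction the lattice $L=R[x]/(g(x))$ you build reduces to the single indecomposable $V_\alpha(\epsilon,K)$, not to $N_I$; you are right that condition~\ref{cond-liftc} says precisely that $N_I$ is the associated graded of $V_\alpha(\epsilon,K)$ along the $(\tau-1)$-filtration, but passing from a module to its associated graded on the special fibre by ``elementary modifications'' of $L$ is the substantive construction, not a formality. The filtration $(\tau-1)^jL$ is not by $R$-direct summands (the structure constants $\zeta_q^i-\zeta_q^j$ have positive valuation), so the modified lattice must be written down explicitly and its $R$-freeness and $G$-stability checked.

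In the necessity direction the entire argument rests on the unproved claim that for an \emph{indecomposable} $R[G]$-lattice the $\tau$-eigenvalue multiset on the generic fibre is multiplicity-free, yielding $K\le q$ and hence condition~\ref{cond-lifta}. You correctly flag this as the main obstacle, but the appeal to Heller--Reiner handles only $q=p$; for $h\ge 2$ and $R\ni\zeta_q$ the order $R[C_q]$ has complicated lattice theory with no a priori rank bound on indecomposables, and it is not evident that the added $C_m$-structure forces multiplicity-freeness. A cleaner route to necessity likely avoids indecomposability of the lattice altogether and instead exploits the decomposition of the generic fibre $\tilde M\otimes F$ into $F[G]$-irreducibles, each of dimension $1$ or $m$ with $\tau$-spectrum a single $\langle\alpha\rangle$-orbit in $\mu_q$, and recovers the partition $\{I_\nu\}$ by grouping these irreducibles and tracking how a chosen lattice in each reduces.
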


\begin{proof}
 See \cite{MR4779377}. 
\end{proof}

The idea of the above theorem is that  indecomposable $k[G]$-modules in the decomposition of $H^0(X,\Omega_X)$ of the special fibre, should be combined together in order to give indecomposable modules in the decomposition of holomorphic differentials of the  relative curve. 
Keep in mind that the lifting of the cyclic group acting on a curve of characteristic zero in the generic fibre, has the additional property that every eigenvalue of a generator of $C_q$ is different than one, see {proposition} \ref{prop:no-eigenvalues}. 

Notice that when $m=2$, that is for the case of dihedral groups $D_q$ of order $2q$, there is no need to pair two indecomposable $k[D_q]$-modules together in order to lift them into an indecomposable $R[D_q]$-module. The sets $I_\nu$ can be singletons and the conditions of theorem \ref{cond-lift} are trivially satisfied. For example, condition \ref{cond-lift}.\ref{cond-liftb} does not give any information since every integer is either odd or even.  This means that the linear representations always lift. 

In our geometric setting on the other hand, we know that in the generic fibre cyclic actions do not have identity eigenvalues, see proposition \ref{prop:no-eigenvalues}. This means that we have to consider lifts that satisfy \ref{cond-lift}.\ref{cond-liftb} with $a=0$. Therefore, indecomposable modules for $G=C_q \rtimes C_2 =D_q$ of odd dimension $d_1$ should find another indecomposable module of odd dimension $d_2$ in order to lift to an $R[G]$-indecomposable module of even dimension $d_1+d_2$. Moreover, this dimension should satisfy $d_1+d_2 \leq q$. If we also take care of the condition \ref{cond-lift}.\ref{cond-liftc} we arrive at the following

\begin{criterion}
\label{crit:main}
{If the HKG-curve  acted on by $D_q$ lifts in characteristic zero, then} all indecomposable summands $V_{\alpha}(\varepsilon,d)$, where $\varepsilon\in \{0,1\}$ and $1 \leq d \leq q$ with $d$ odd have a pair 
$V_\alpha (\varepsilon',d')$, with $\varepsilon' \in \{0,1\}- \{\varepsilon\}$ and $d'$ odd and  $d+d'\leq q$. 
Notice that since, $d,d'$ are both odd we have
\[
  V_{\alpha}(\varepsilon,d)=U_{\varepsilon+d-1 \mod 2,d} =
  U_{\varepsilon,d}, \quad V_{\alpha}(\varepsilon',d')=U_{\varepsilon'+d'-1 \mod 2,d'}=U_{\varepsilon',d'}. 
\]
The indecomposable modules given above will be called {\em complementary}. 
We will apply this criterion for complementary modules in the $U_{\varepsilon,d}$-notation. 
\end{criterion}
In order to apply this idea we need a detailed study of the direct $k[G]$-summands of $H^0(X,\Omega_X)$, for $G=C_q \rtimes C_m$. This is considered in section \ref{sec:HolomorphicDifferentialSpecialFibre}, where we employ the joint work of the first author with F. Bleher and T. Chinburg  \cite{MR4130074}, in order to compute the decomposition of $H^0(X,\Omega_X)$ into indecomposable $kG$-modules, in terms of the ramification filtration of the local action. 
Then the lifting criterion of theorem \ref{cond-lift} is applied. 
Our method gives rise to an algorithm which takes as input a group $C_q \rtimes C_m$, with a given sequence of lower jumps and decides whether 
the representation can be lifted or not. This algorithm is implemented in sage 9.8 \cite{sagemath9.8}  and our code is freely available \cite{CodeSage}.

In section \ref{sec:no-lift} 
we give an example of a $C_{125} \rtimes C_4$ HKG-curve which does not lift and then we restrict ourselves to the case of dihedral groups. 
The possible ramification filtrations for local actions of the group $C_q \rtimes C_m$ were computed in the work of A. Obus and R. Pries in \cite{MR2577662}. 
We focus on the case of dihedral groups $D_q$ with lower jumps
\begin{equation}
\label{eq:jumpsPO}
  b_\ell = w_0\frac{p^{2 \ell}+1}{p+1}, 0 \leq \ell \leq h-1.
\end{equation}
For the {value}  $w_0=9$ we will show that the local action of the dihedral group $D_{125}$ does not lift, providing a counterexample to the conjecture that the KGB-obstruction is the only obstruction to the local lifting problem. 



\noindent {\bf Acknowledgements.}
We would like to thank A. Obus for his remarks and comments on an earlier version of this article. 


%
%
%
\section{Notation}

In this article we will study  {\em metacyclic} groups $G=C_q \rtimes C_m$, where $q=p^h$ is a power of the characteristic and $m\in \mathbb{N}, (m,p)=1$. Let $\tau$ be a generator of the cyclic group $C_q$ and $\sigma$ be a generator of the cyclic group $C_m$.  

The group $G$ is given in terms of generators and relations as follows:
\begin{equation}
\label{eq:G-def}
G=\langle \sigma,\tau | \tau^q=1, \sigma^m=1, \sigma \tau \sigma^{-1}= \tau^{\alpha}  \rangle,
\end{equation}
for some $\alpha\in \mathbb{N},
 1\leq \alpha \leq p^h-1, (\alpha, p)=1$.
The integer $\alpha$ satisfies the following congruence:
\begin{equation}
  \label{a-modq}
  \alpha^m \equiv 1 \mod q
\end{equation}
as one sees by computing $\tau=\sigma^m \tau\sigma^{-m}=\tau^{\alpha^m}$. 
Also the integer $\alpha$ can be seen as an element in the finite field $\mathbb{F}_p$, and it is a
 $(p-1)$-th root of unity, not necessarily primitive. In particular the following holds:
\begin{lemma}\label{loga} Let $\zeta_m$ be a fixed primitive $m$-th root of unity. 
There is a natural number  $a_0$, $0\leq a_0 < m-1$ such that $\alpha=\zeta_m^{a_0}$.  
\end{lemma}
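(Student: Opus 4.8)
The plan is to reduce everything modulo $p$ and then use that the group of $m$-th roots of unity in $k$ is cyclic of order exactly $m$. First I would take the congruence \eqref{a-modq}, namely $\alpha^m \equiv 1 \pmod q$, and reduce it modulo $p$, which is legitimate since $p\mid q$. Writing $\bar\alpha$ for the image of the integer $\alpha$ in $\Fp\subset k$, this yields $\bar\alpha^m=1$ in $\Fp$. Moreover $\bar\alpha\neq 0$ because $(\alpha,p)=1$, so $\bar\alpha$ is a unit, and, being an element of the cyclic group $\Fp^\times$ of order $p-1$, it is in particular a $(p-1)$-st root of unity; what matters for us is that it is a root of $X^m-1$.

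Next I would invoke the hypothesis $(m,p)=1$. The polynomial $X^m-1$ is then separable over $k$, and since $k$ is algebraically closed it splits into $m$ distinct linear factors; hence $\mu_m(k):=\{x\in k:x^m=1\}$ is a cyclic subgroup of $k^\times$ of order exactly $m$. By the definition of $\zeta_m$ we have $\mu_m(k)=\langle\zeta_m\rangle$. Combining with the previous step, $\bar\alpha\in\mu_m(k)=\langle\zeta_m\rangle$, so there is a unique integer $a_0$ with $0\leq a_0\leq m-1$ satisfying $\bar\alpha=\zeta_m^{a_0}$; identifying the integer $\alpha$ with its reduction $\bar\alpha$, as the statement just above the lemma does, this is precisely the asserted identity $\alpha=\zeta_m^{a_0}$.

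I do not expect any genuine obstacle here, since the argument is essentially a one-line observation; the only points to be careful about are bookkeeping ones. One must keep the integer $\alpha$ (normalized by $1\leq\alpha\leq p^h-1$, $(\alpha,p)=1$) distinct from its reduction $\bar\alpha\in\Fp$, and one must take $\zeta_m$ in the algebraically closed field $k$, where a primitive $m$-th root of unity exists precisely because $(m,p)=1$, rather than in $\Fp$ itself: in $\Fp$ the element $\bar\alpha$ need only generate a subgroup of $\Fp^\times$ of order $\gcd(m,p-1)$, which can be strictly smaller than $m$. Equivalently, one can phrase the whole thing group-theoretically: conjugation by $\sigma$ is an automorphism of $C_q=\langle\tau\rangle$ annihilated by $m$, so $\alpha$ represents an element of order dividing $m$ in $(\Z/q\Z)^\times$, and the lemma simply records that its image in $\Fp^\times$ lies in the group $\langle\zeta_m\rangle$ of $m$-th roots of unity.
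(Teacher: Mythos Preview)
Your argument is correct and follows essentially the same route as the paper: both proofs reduce $\alpha$ modulo $p$, observe that $\bar\alpha^m=1$ in $k$, and conclude that $\bar\alpha$ lies in the cyclic group $\langle\zeta_m\rangle$ of $m$-th roots of unity. The only cosmetic difference is that the paper reaches this last step by embedding $\langle\alpha\rangle$ and $\langle\zeta_m\rangle$ in the cyclic group $\mathbb{F}_{p^\nu}^\times$ and invoking uniqueness of subgroups of a given order, whereas you go directly via $\mu_m(k)=\{x:x^m=1\}=\langle\zeta_m\rangle$; your phrasing is, if anything, slightly more streamlined.
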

\begin{proof}
The integer $\alpha$, if we see it as an element in the field $k$ of characteristic $p>0$, is an element in the finite field $\mathbb{F}_p\subset k$, therefore $\alpha^{p-1}=1$ as an element in $\mathbb{F}_p$.
Let $\mathrm{ord}_p(\alpha)$ be the order of $\alpha$ in $\mathbb{F}_p^*$. By eq. \ref{a-modq} we have that $\mathrm{ord}_p(\alpha) \mid p-1$ and $\mathrm{ord}_p(\alpha) \mid m$, that is $\mathrm{ord}_p(\alpha) \mid (p-1,m)$. 

The primitive $m$-th root of unity $\zeta_m$ generates a finite field $\mathbb{F}_p(\zeta_m)=\mathbb{F}_{p^\nu}$
for some integer $\nu$, which has cyclic multiplicative  group $\mathbb{F}_{p^\nu}\backslash \{0\}$ containing both the cyclic groups $\langle \zeta_m \rangle$ and $\langle \alpha \rangle$. Since  for every divisor $\delta$ of the order of a cyclic group $C$ there is a unique subgroup $C' < C$ of order $\delta$ we have that $\alpha \in \langle \zeta_m \rangle$, and the result follows. 
\end{proof}
\begin{remark}\label{remark:HKB}
  {It is known \cite[prop. 5.9]{Obus12} that}
for the case of $C_q \rtimes C_m$ the KGB-obstruction vanishes if and only if the first lower jump $h$ satisfies $h\equiv -1\mod m$. For this to happen, the conjugation action of $C_m$ on $C_q$ has to be faithful, see \cite[prop. 5.9]{Obus12}. Also notice that by \cite[th. 1.1]{MR2577662}, that if $u_0,u_1,\ldots,u_{h-1}$ is the sequence of upper ramification jumps for the $C_q$ subgroup, then the condition $h\equiv -1 \mod m$ implies that all upper jumps $u_i$ are congruent to $-1$ modulo $m$, that is $u_i \equiv -1 \mod m$. 
\end{remark}

\section{HKG-covers and their canonical ideal}
\label{sec:HKG-covers}
\begin{lemma}
    \label{lemma:ineqHK}
Consider the Harbater-Katz-Gabber curve corresponding to the local group action of $C_q \rtimes C_m$, where $q=p^h$ is a power of the characteristic $p$. If one of the following conditions holds:
\begin{itemize}
\item $h\geq 3$ or $h=2, p>3$ 
\item $h=1$ and the first jump $i_0$ in the ramification filtration for the cyclic group satisfies
$i_0\neq 1$ and $q\geq \frac{12}{i_0-1}+1$,
\end{itemize}
then the curve $X$ has canonical ideal generated by quadratic polynomials.
\end{lemma}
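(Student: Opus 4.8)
The plan is to reduce the statement to the classical criterion of Petri, which says that a non-hyperelliptic curve $X$ of genus $g\geq 3$ has canonical ideal generated by quadrics unless $X$ is trigonal or a plane quintic (the latter forcing $g=6$). So the strategy has two parts: first show $X$ is non-hyperelliptic of genus $g\geq 3$ under the stated hypotheses, and second rule out the trigonal and plane-quintic exceptions. I would start by recalling that $X\to X/G=\mathbb{P}^1$ is a Harbater-Katz-Gabber cover, wildly and totally ramified over one point with ramification group $G=C_q\rtimes C_m$ and tamely ramified at one further point with inertia $C_m$. In particular $X\to X^{C_q}\cong\mathbb{P}^1$ is wildly totally ramified at one point, so one may use the Riemann-Hurwitz computation of eq.~(\ref{eq-gi}) together with the lower-jump data $b_0,\ldots,b_{h-1}$ to get explicit lower bounds on $g$ in each case. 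For $h\geq 3$, or $h=2$ with $p>3$, the smallest possible jumps $b_0=i_0-1\geq p-1$ (recall $m\mid i_j$ and $i_0\geq 1$, with $p\nmid b_0$) already force $g$ to be comfortably larger than $6$, eliminating the plane-quintic case; in the $h=1$ case the hypothesis $q\geq \frac{12}{i_0-1}+1$ is exactly engineered so that the formula $g=1-q+\tfrac12 i_0(q-1)=\tfrac12(i_0-1)(q-1)$ gives $g\geq 6$, and in fact $g\geq 6$ suffices to separate us from the plane-quintic exception once trigonality is excluded.

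The heart of the argument is ruling out that $X$ is hyperelliptic or trigonal, i.e.\ that $X$ carries a $g^1_d$ with $d\leq 3$. Here I would exploit the large automorphism group. Since $C_q\leq\A(X)$ acts with a totally ramified point $P$ of order $q=p^h$, any linear series is $C_q$-invariant, and a base-point-free $g^1_d$ gives a degree-$d$ map $\phi:X\to\mathbb{P}^1$ which is $C_q$-equivariant for some action of $C_q$ on the target $\mathbb{P}^1$. If $d\leq 3$ and $q=p^h$ with $h\geq 1$, then $q>d$ forces $C_q$ to act trivially on $\mathbb{P}^1$ (a cyclic $p$-group cannot act faithfully on $\mathbb{P}^1$ through $\mathrm{PGL}_2$ with a point of order exceeding $3$, as its image would be a $p$-subgroup of $\mathrm{PGL}_2(k)$, hence of order $\leq p$, in fact the fibre degree bounds the order), so every fibre of $\phi$ is $C_q$-stable; but the fibre through $P$ then has size $\leq d\leq 3 < q$ yet must be a union of $C_q$-orbits, and the orbit of $P$ is a single point since $P$ is totally ramified, while a general fibre has $d$ distinct points permuted by $C_q$ in orbits of $p$-power size $\leq d\leq 3$, forcing all these orbits to be fixed points — impossible for a faithful action unless $C_q$ fixes the whole fibre, which would make $\phi$ factor through $X/C_q$, contradicting $g(X/C_q)=0$ and $\deg\phi\leq 3$. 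This rules out $d\leq 3$, hence $X$ is neither hyperelliptic nor trigonal. (For $h=1$ one must additionally invoke the bound $q\geq\frac{12}{i_0-1}+1\geq 7$, so indeed $q>3$.)

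Having excluded the hyperelliptic, trigonal, and plane-quintic cases, Petri's theorem applies and $I_X$ is generated by quadrics. I would organize the write-up as: (i) genus bounds from Riemann-Hurwitz in each case; (ii) the equivariance argument forbidding low-degree pencils; (iii) citation of Petri (e.g.\ Saint-Donat, or Arbarello-Cornalba-Griffiths-Harris) to conclude. I expect the main obstacle to be step (ii): getting a clean, gap-free argument that a wildly totally ramified $C_q$-action with $q=p^h>3$ is incompatible with a $g^1_2$ or $g^1_3$. The subtlety is that $C_q$ need not act faithfully on the target $\mathbb{P}^1$ of the pencil at all — the safe route is to argue directly with the fibre through the wild ramification point $P$: since $\phi^{-1}(\phi(P))\ni P$ is $C_q$-invariant of degree $d\leq 3$ and $P$ is its only $C_q$-fixed point on $X$ lying over $\phi(P)$ in the relevant component, the remaining $\leq 2$ points must form $C_q$-orbits of size dividing $q=p^h$ and $\leq 2$, hence be fixed, contradicting that $X^{C_q}$ is a single point unless the fibre is all of $\{P\}$ with multiplicity $d$, i.e.\ $P$ is a total ramification point of $\phi$ too; but then $\phi$ and the quotient map $X\to X/C_q$ both totally ramify at $P$ with coprime-to-compatible degrees, and comparing with $g(X/C_q)=0$ via Riemann-Hurwitz on $\phi$ gives $2g-2 = d(-2) + (\text{ramification}) $ with ramification too small to reach the genus bound from step (i). Making this last inequality explicit in each of the listed regimes is the technical crux.
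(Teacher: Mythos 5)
Your overall frame (Petri's theorem plus a genus bound plus excluding low-degree pencils) is the same as the paper's, and your step (i) — the Riemann--Hurwitz genus estimates, with the $h=1$ formula $2g=(i_0-1)(q-1)$ and the hypothesis $q\geq \frac{12}{i_0-1}+1$ tuned to give $g\geq 6$ — matches what is done there. But the heart of your argument, step (ii), diverges from the paper and contains a genuine gap precisely at what you call the technical crux. First, "any linear series is $C_q$-invariant" is not true in general; what is true (and what the paper uses, via Clifford's theorem for non-hyperelliptic curves of genus $\geq 5$) is that a $g^1_3$ is \emph{unique}, hence automorphism-invariant, and likewise the hyperelliptic $g^1_2$. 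Second, $q>d$ does not force $C_q$ to act trivially on the target $\mathbb{P}^1$: the image of $C_q$ in $\mathrm{PGL}_2(k)$ can be of order $p$ (the unipotent subgroup is $(k,+)$), so fibres are in general only permuted, and your orbit argument must be run on the fibre through $P$ only — which you do, but this still leaves open the case where that fibre is $dP$, i.e.\ $\phi$ is totally ramified at $P$.

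It is in disposing of that remaining case that your proposed argument fails. You suggest getting a contradiction from Riemann--Hurwitz applied to the degree-$d$ map $\phi$ ("ramification too small to reach the genus bound"), but Riemann--Hurwitz on $\phi$ can never contradict a lower bound on $g$: in characteristic $p\leq 3$ the wild different at a single point of a degree $2$ or $3$ map is unbounded, and in the tame case the formula simply computes $\deg R=2g+2$ (resp.\ $2g+4$), consistent with any genus. Hyperelliptic and trigonal curves of arbitrarily large genus with a totally ramified point do exist; what rules them out here is the \emph{group structure}, not numerics of $\phi$. This is exactly where the paper's route goes differently: for the hyperelliptic case it uses that the hyperelliptic involution is central (so it cannot lie in $C_m$, since $\sigma$ acts faithfully on $\tau$ by conjugation), whence $G$ injects into a subgroup of $\mathrm{PGL}_2(k)$ acting on $X/\langle j\rangle\cong\mathbb{P}^1$; for the trigonal case the unique $g^1_3$ gives $\phi:G\to\mathrm{PGL}_2(k)$ with kernel trivial or of order $3$, and in either case one obtains an action of $C_q\rtimes C_{m'}$ on a rational curve. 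Both branches are then killed by the Valentini--Madan classification together with remark \ref{rem:rat-act}: a cyclic group of order $p^h$, $h\geq 2$, cannot act on a rational curve, and a $C_p$-action on a rational curve forces first jump $i_0=1$, excluded by hypothesis when $h=1$. To repair your proof you would need to replace the Riemann--Hurwitz step by an argument of this kind (e.g.\ showing that in the residual cases the pencil is the quotient by a subgroup of $C_q$, so a quotient $C_{p^{h-1}}$ acts on $\mathbb{P}^1$), which is essentially reconstructing the paper's group-theoretic argument. A smaller issue, shared with the paper, is that Petri's theorem also excepts plane quintics of genus $6$; your claim that $g\geq 6$ plus non-trigonality disposes of this is not correct as stated and would need a separate word in the boundary case $g=6$.
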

\begin{remark}
  Notice that in \cite{MR4194180} the canonical ideal for HKG-covers is explicitly described.
\end{remark}
\begin{remark}
Notice, that the missing cases in the above lemma which  satisfy the KGB obstruction, are all either cyclic, $D_3$ or $D_9$,  which are all known local Oort groups. 
\end{remark}
\begin{proof}
Using Petri's theorem \cite{Saint-Donat73} we will need to prove that the curve $X$ has genus $g\geq 6$ provided that $p$ or $h$ are as in the statement of lemma \ref{lemma:ineqHK}.
 We will also prove that the curve $X$ is not hyperelliptic nor trigonal.

\begin{remark}
\label{rem:rat-act}
Let us first recall that a cyclic group of order $q=p^h$ for $h\geq2$ cannot act on the rational curve, see \cite[thm 1]{Val-Mad:80}. Also let us recall that a cyclic group of order $p$ can act on a rational curve and in this case the first and only break in the ramification filtration is $i_0=1$. This latter case is excluded.  
\end{remark}

Consider first the case $p^h=p$ and $i_0\neq 1$. 
In this case we compute the genus $g$  of the HKG-curve $X$ using Riemann-Hurwitz formula:
\[
2g=2-2mq + q(m-1) +qm-1+ i_0(q-1), 
\]
where the contribution $q(m-1)$ is from the $q$-points above the unique tame ramified point, while $qm-1+i_0(q-1)$ is the contribution of the wild ramified point. 
This implies that
\[
2g = (i_0-1)(q-1),  
\]
therefore if $i_0\geq2$, it suffices to have $q=p^h \geq 13$ and more generally  it is enough to have $q \geq \frac{12}{i_0-1}+1$ in order to ensure that $g\geq 6$. 

For the case  $h\geq2$, we can write a stronger inequality based on Riemann-Hurwitz theorem as (recall that $i_0\equiv i_1 \mod p$ so $i_0-i_1 \geq p$)
\begin{equation}
\label{ineq-genus}
2g \geq (i_0-1)(p^{h}-1) + (i_0-i_1)(p^{h-1}-1) \geq p^h-p,
\end{equation}
which implies that $g\geq 6$ for $p>3$ or $h\geq 3$.



In order to prove that the curve is not hyperelliptic we observe that the automorphism group of a hyperelliptic curve contains a normal subgroup generated by the hyperelliptic involution $j$, so that $X \rightarrow X/\langle j \rangle=\mathbb{P}^1$. 
It is known that the automorphism group of a hyperelliptic curve fits in the short exact sequence 
\begin{equation}
\label{eq:hyp-aut}
1 \rightarrow \langle j \rangle  \rightarrow \Aut(X) \rightarrow H \rightarrow 1, 
\end{equation}
where $H$ is a subgroup of $\mathrm{PGL}(2,k)$, see
 \cite{BraStich86}. If 
$m$ is odd then the hyperelliptic involution is not an element in $C_m$.  If $m$ is even, let $\sigma$ be a generator of the cyclic group of order $m$ and $\tau$
a generator of the group $C_q$. The involution  $\sigma^{m/2}$ again can't be the hyperelliptic involution. Indeed, the hyperelliptic involution is central, while the conjugation action of $\sigma$ on $\tau$ is faithful that is $\sigma^{m/2} \tau \sigma^{-m/2} \neq \tau$.
In this case $G=C_q \rtimes C_m$ is a subgroup of $H$ which should  act on  the rational function field. By the classification of such groups in \cite[Th. 1]{Val-Mad:80} this is not possible, for $m>2$, while the case $m=2$, i.e. the case of the dihedral group $D_q$ can also be ruled out using Remark \ref{rem:rat-act}.
Thus $X$ can't be hyperelliptic.

We will prove now that the curve is not trigonal. 
Using Clifford's theorem we can show  \cite[B-3 p.137]{MR770932} that a non-hyperelliptic curve of genus $g\geq 5$ cannot have two distinct $g_3^1$. Notice that we have already required the stronger condition $g \geq 6$. 
 So if there is a $g_3^1$, then this is unique. Moreover, 
 the $g_3^1$ gives rise to a map $\pi:X \rightarrow \mathbb{P}^1$ and every automorphism of the curve $X$ fixes this map. Therefore, we obtain 
 a morphism $\phi:C_q \rtimes C_m \rightarrow \mathrm{PGL}_2(k)$ and we 
arrive at the short exact sequence 
\[
1 \rightarrow \mathrm{ker} \phi  \rightarrow C_q \rtimes C_m \rightarrow H \rightarrow 1,
\]
for some finite subgroup $H$ of $\mathrm{PGL}(2,k)$. 
If $\mathrm{ker} \phi=\{1\}$,  then  we have the tower of curves
$X \stackrel{\pi}{\longrightarrow} \mathbb{P}^1 \stackrel{\pi'}{\longrightarrow} \mathbb{P}^1$, where $\pi'$ is a Galois cover with group $C_q \rtimes C_m$. This 
implies that $X$ is a rational curve contradicting 
{Remark}
 \ref{rem:rat-act}. If $\mathrm{ker}\phi$ is a cyclic group of order $3$, then we have that $3\mid m$ and the tower $X \stackrel{\pi}{\longrightarrow} \mathbb{P}^1 \stackrel{\pi'}{\longrightarrow} \mathbb{P}^1$, where $\pi$ is a cyclic Galois cover of order $3$ and $\pi'$ is a Galois cover with group $ C_q \rtimes  C_{m/3}$. As before this contradicts 
{Remark} \ref{rem:rat-act} and is not  possible. 
\end{proof}

\section{Invariant subspaces of vector spaces}
\label{sec:invariant-subspaces}

The $g\times g$ symmetric matrices $A_1,\ldots,A_r$ defining the quadratic canonical ideal of the curve $X$, define a vector subspace of the vector space $V$ of $g\times g$ symmetric matrices. By the Oort conjecture, we know that there is a lifted relative curve $\tilde{X}$ and 
by proposition \ref{red-new-prot} to the lift of $\tilde{X}$ correspond symmetric matrices $\tilde{A}_1,\ldots,\tilde{A}_r$ with entries in a local principal ideal domain $R$, which reduce to the initial matrices $A_1,\ldots,A_r$.
Moreover, by theorem \ref{th:main-lift} the submodule  $\tilde{V}=\langle \tilde{A}_1,\ldots,\tilde{A}_r \rangle$ is left invariant under the action of a lifting $\tilde{\rho}$ of the representation $\rho:C_q \rightarrow \Gl _g(k)$.

\begin{proposition}
\label{prop:no-eigenvalues}
Let $\tilde{g}$ be the genus of the quotient curve $X/H$ for a subgroup $H$ of the automorphism group of a curve $X$ in characteristic zero. We have
\[
\dim H^0(X,\Omega_X^{\otimes d})^{H}=
\begin{cases} \displaystyle
\tilde{g} & \text{ if } d=1 \\
(2d-1)(\tilde{g}-1) + \sum_{P \in X/G} 
\lf d  
\left(
1 - \frac{1}{e(\tilde{P})}
\right) \rf & \text{ if } d> 1
\end{cases}
\]
\end{proposition}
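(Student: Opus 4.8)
The plan is to descend the computation to the quotient curve $Y:=X/H$. Write $\pi\colon X\to Y$ for the quotient morphism; in characteristic zero $\pi$ is a tame cover. Since $\pi$ is finite, $\pi_*$ is exact, and since $|H|$ is invertible the Reynolds operator $\tfrac{1}{|H|}\sum_{h\in H}h$ gives a functorial splitting, so $M\mapsto M^H$ is exact on $H$-linearized $\mathcal{O}_Y$-modules; combining these, $H^i(X,\Omega_X^{\otimes d})^H\cong H^i\bigl(Y,(\pi_*\Omega_X^{\otimes d})^H\bigr)$ for every $i$, and $\mathcal{L}_d:=(\pi_*\Omega_X^{\otimes d})^H$ is an invertible sheaf on $Y$. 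Thus the first task is to identify $\mathcal{L}_d$, and the second is to compute its $h^0$ by Riemann--Roch.

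To identify $\mathcal{L}_d$ I would work locally. Over the étale locus of $\pi$ one has $\pi^*\Omega_Y\cong\Omega_X$, so there $\mathcal{L}_d=\Omega_Y^{\otimes d}$. At a point $\bar P\in Y$ over which $\pi$ ramifies with index $e=e(\bar P)$, choose a uniformizer $t$ of $\widehat{\mathcal{O}}_{X,P}$ on which a generator $\gamma$ of the (cyclic, order $e$) inertia group acts by $\gamma(t)=\zeta_e t$, so that $s:=t^e$ is a uniformizer of $\widehat{\mathcal{O}}_{Y,\bar P}$. A germ $f(t)(dt)^{\otimes d}$ is $\gamma$-invariant iff $f(\zeta_e t)\zeta_e^{\,d}=f(t)$, i.e. $f(t)=t^{\,r}g(t^e)$ with $g\in k[[s]]$ free and $r$ the residue of $-d$ modulo $e$ in $\{0,\dots,e-1\}$; using $dt=e^{-1}t^{1-e}\,ds$ this germ becomes $e^{-d}g(s)\,s^{\,\lceil d/e\rceil-d}(ds)^{\otimes d}$. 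Since $g$ is arbitrary, and $d-\lceil d/e\rceil=\lfloor d(1-1/e)\rfloor$ for integral $d$, this yields
\[
\mathcal{L}_d\;\cong\;\Omega_Y^{\otimes d}\otimes\mathcal{O}_Y(D),\qquad D:=\sum_{\bar P\in Y}\big\lfloor d\,(1-1/e(\bar P))\big\rfloor\,\bar P .
\]
For $d=1$ every coefficient of $D$ vanishes, so $\mathcal{L}_1=\Omega_Y$ and $\dim H^0(X,\Omega_X)^H=\dim H^0(Y,\Omega_Y)=\tilde g$, which is the first line of the claimed formula.

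For $d\ge 2$ I would conclude with Riemann--Roch on $Y$: $\chi(\mathcal{L}_d)=\deg\mathcal{L}_d+1-\tilde g=d(2\tilde g-2)+\deg D+1-\tilde g=(2d-1)(\tilde g-1)+\sum_{\bar P}\lfloor d(1-1/e(\bar P))\rfloor$, which is precisely the asserted value; so it suffices to show $H^1(Y,\mathcal{L}_d)=0$. By the isomorphism above $H^1(Y,\mathcal{L}_d)$ is a direct summand of $H^1(X,\Omega_X^{\otimes d})$, and the latter equals $H^0(X,\Omega_X^{\otimes(1-d)})^{\vee}=0$ by Serre duality, since $\deg\Omega_X^{\otimes(1-d)}=(1-d)(2g_X-2)<0$ for $d\ge 2$ (here $g_X\ge 2$, as holds for all curves under consideration). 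The step I expect to be the main obstacle is the local model at the ramified points: one has to pin down the exponent $r$, hence the floor $\lfloor d(1-1/e)\rfloor$, exactly, and check that the free holomorphic parameter $g$ downstairs really fills the whole stalk of $\Omega_Y^{\otimes d}(D)$ at $\bar P$; the exactness of $(-)^H$ (where characteristic zero enters) is routine but must be invoked so that the vanishing of $h^1$ downstairs can be read off upstairs.
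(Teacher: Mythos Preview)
Your argument is correct and complete. The paper itself does not give a proof at all: it simply cites \cite[eq.~2.2.3, 2.2.4, p.~254]{Farkas-Kra}, where the result is established in the analytic setting of compact Riemann surfaces via the classical theory of $q$-differentials and automorphic forms. Your proof is a genuinely different, purely algebraic route: you push $\Omega_X^{\otimes d}$ down to $Y=X/H$, use the Reynolds operator (this is where characteristic zero enters) to identify $H^i(X,\Omega_X^{\otimes d})^H$ with $H^i(Y,(\pi_*\Omega_X^{\otimes d})^H)$, compute the invariant sheaf locally as $\Omega_Y^{\otimes d}(D)$ with $D=\sum_{\bar P}\lfloor d(1-1/e(\bar P))\rfloor\,\bar P$, and finish with Riemann--Roch on $Y$ together with the vanishing of $H^1(X,\Omega_X^{\otimes d})$ for $d\ge 2$. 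The local exponent computation and the identity $d-\lceil d/e\rceil=\lfloor d(1-1/e)\rfloor$ are handled correctly. Your approach has the advantage of being self-contained and of working uniformly over any algebraically closed field of characteristic zero, not just $\mathbb{C}$; the only caveat is the hypothesis $g_X\ge 2$ needed for the $H^1$-vanishing, which you flag explicitly and which is amply satisfied for the HKG-covers treated in the paper.
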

\begin{proof}
See \cite[eq. 2.2.3,2.2.4 p. 254]{Farkas-Kra}.
\end{proof}
Therefore, a generator of $H=C_q$ acting on $H^0(X,\Omega_X)$ has no identity eigenvalues, 
  Thus $m$ should divide $g$. This means that we have to consider liftings of indecomposable summands of  the $C_q$-module $H^0(X,\Omega_X)$, which satisfy condition \ref{cond-lift}.\ref{cond-liftb} with $a=0$.
We now assume that condition \ref{cond-lift}.\ref{cond-liftb} of theorem \ref{cond-lift} can be fulfilled, so there is a lifting
of the representation
\[
\xymatrix{
  & \Gl _g(R) \ar[d]^{\mod \mathfrak{m}_R}\\
  C_q \rtimes C_m \ar[r]^{\rho} 
\ar[ru]^{\tilde{\rho}}
  & \Gl _g(k)
}
\]

{The canonical ideal of the special fibre $X$ corresponds to a vector space $V \subset k^{N}$, $N=\frac{g(g+1)}{2}$, of dimension $r=\binom{g-2}{2}$, see \cite[remark 8]{2101.11084}. This space is acted on by  $G=C_q \rtimes C_m$  in terms of the action given by
\[
\rho^{(1)}(\gamma) (e)= \rho(\gamma )^t e \rho(\gamma), \text{ for } \gamma \in G \text{ and } e \in V.
\]
We select a lifting of this curve together with the action of the cyclic group action $C_q=\langle \tau \rangle$ which exists by the statement of the Oort conjecture for cyclic groups. 
We therefore arrive at a free submodule $\tilde{V} \subset R^N$. If moreover $e_1, \ldots , e_r$ is a basis of $V$ then there are elements  $E_1, \ldots ,E_r$,  which are $g\times g$ matrices with entries in $R$ so that $E_i \equiv e_i \mod \mathfrak{m}_R$ and  forming a free submodule of the module of symmetric $g\times g$ matrices with entries in $R$, together with an action of $C_q$ given by
\[
    \tilde{\rho}^{(1)}(\tau^j)(E_i) = 
    \tilde{\rho}(\tau^j)^t E_i \tilde{\rho}(\tau^j).
\text{ for all } 1\leq j \leq q.
\]
If we can show for every $E \in \tilde{V}$ we have  $\tilde{\rho}^c(\sigma^j)(E) \in \tilde{V}$, then the curve can be lifted together with the $G$ action. 
 
One might try to deform the matrices $E_1,\ldots, E_r$ to matrices $\tilde{E}_1, \ldots , \tilde{E}_r$ so that $\tilde{E}_i= e_i \mod \mathfrak{m}_R$ so that the free $R$-module $\langle \tilde{E}_1, \ldots , \tilde{E}_r \rangle$, equipped with the $\tilde{\rho}^{(1)}$ action is $G$-invariant. It seems that this is not always possible. For instance, we can take as 
\[
  V=V_{\alpha }(1,2) \subset V_{\alpha }(1,2) \bigoplus
  V_{\alpha }(3,2)=W.
\]
as in the example \cite[p. 777]{MR4779377}. The $G$-module $W$ lifts in characteristic zero by theorem \ref{th:main-lift}, while there is no way to modify the basis of $V$ in order to obtain a $G$-module $\tilde{V}$ of rank $2$ as the original module. 

Writing a sufficient condition on whether the module $\langle e_1, \ldots , e_r \rangle$ can be lifted requires the knowledge of $G$-module structure of 
$\langle e_1, \ldots , e_r\rangle =\mathrm{Tor}^1(k,I_X)$,  see eq.(3) in \cite{2101.11084}. This $G$-module structure is still unkwown. 
}

\section{Galois module structure of holomorphic differentials, special fibre}
\label{sec:HolomorphicDifferentialSpecialFibre}



Consider the group $C_q \rtimes C_m$. Let $\tau$ be a generator of $C_q$ and $\sigma$ a generator of $C_m$.  It is known that $\Aut(C_q)\cong \mathbb{F}_p^* \times Q$, for some abelian group $Q$. The representation $\psi:C_m \rightarrow \Aut(C_q)$ given by the action of $C_m$ on $C_q$ is known to factor through a character $\chi:C_m \rightarrow \mathbb{F}_p^*$. The order of $\chi$
divides $p-1$ and $\chi^{p-1}=\chi^{-(p-1)}$ is the trivial 
{one-dimensional
}
character. 
In our setting, using the definition of $G$ given in eq.\;(\ref{eq:G-def}) and lemma \ref{loga} we have that the character $\chi$ is defined by 
\begin{equation}
\label{eq:chi-def}
  \chi(\sigma)=\alpha=\zeta_m^{a_0} \in \mathbb{F}_p.
\end{equation}
For all $i\in \Z$, $\chi^i$ defines a simple $k[C_m]$-module of $k$ dimension one, which we will denote by $S_{\chi^i}$. For $0 \leq \ell \leq m-1$ denote by $S_\ell$ the simple module on which $\sigma$ acts as $\zeta_m^\ell$.  Both $S_{\chi^i}$, $S_\ell$ can be seen as $k[C_q\rtimes C_m]$-modules using inflation. Finally, for $0\leq \ell \leq m-1$ we define $\chi^i(\ell) \in \{0,1,\ldots,m-1\}$ such that $S_{\chi^i(\ell)} \cong S_\ell \otimes_k S_{\chi^i}$. Using eq. (\ref{eq:chi-def}) we arrive at 
\begin{equation}
\label{eq:chi-prod}
S_{\chi^i(\ell)}=S_{\ell+i a_0}.
\end{equation}

There are $q\cdot m$ isomorphism classes of indecomposable $k[C_q \rtimes C_m]$-modules and are all uniserial, i.e. the set of  submodules are totally ordered by inclusion. An indecomposable $k[C_q \rtimes C_m]$-module $U$ is uniquely determined by its socle, which is the kernel of the action of $\tau-1$ on $U$, and its $k$-dimension. For $0 \leq \ell \leq m-1$ and $1\leq \mu \leq q$, let $U_{\ell,\mu}$ be the indecomposable $k[C_q\rtimes C_m]$-module with socle $S_\ell$ and $k$-dimension $\mu$. Then $U_{\ell,\mu}$ is uniserial, {\cite[rem. 3.4]{MR4130074}} and its $\mu$
ascending composition factors are the first $\mu$ composition factors of the sequence 
\[
S_\ell, S_{\chi^{-1}(\ell)},S_{\chi^{-2}(\ell)},\ldots,S_{\chi^{-(p-2)}(\ell)}, 
S_{\ell}, S_{\chi^{-1}(\ell)},S_{\chi^{-2}(\ell)},\ldots, S_{\chi^{-(p-2)}(\ell)}.
\]
\begin{lemma}
\label{comparisonIndec}
There is the following relation between indecomposable modules:
\[
V_{\alpha}(\lambda,\kappa)=U_{(\lambda+a_0(\kappa-1) \mod m,\kappa)}
\]
In particular, for the case of dihedral groups $D_q$
we have the relation, {$a_0=1$},
\[
  V_{\alpha}(\lambda,\kappa)=U_{(\lambda+\kappa-1  \mod  2,\kappa)}.
\]
\end{lemma}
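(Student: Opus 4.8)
The plan is to compare the two parametrizations of indecomposable $k[G]$-modules directly through their defining invariants: the $U_{\ell,\mu}$-module is pinned down by its socle $S_\ell$ and its $k$-dimension $\mu$, so it suffices to compute the socle of $V_\alpha(\lambda,\kappa)$ and to note that its dimension is $\kappa$ by construction. The socle of any uniserial $k[G]$-module is the kernel of $\tau-1$, which in $V_\alpha(\lambda,\kappa)$ is exactly the one-dimensional span of $(\tau-1)^{\kappa-1}e$, since $(\tau-1)^\kappa e=0$ (as recorded in the proof of the proposition above) and the elements $(\tau-1)^\nu e$, $0\le\nu\le\kappa-1$, form a basis. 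So the whole content is: determine the $C_m$-character by which $\sigma$ acts on $(\tau-1)^{\kappa-1}e$.

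First I would work out how $\sigma$ interacts with $\tau-1$ on the module. From the relation $\sigma\tau\sigma^{-1}=\tau^\alpha$ we get $\sigma(\tau-1) = (\tau^\alpha - 1)\sigma$ as operators. Writing $\tau^\alpha-1 = (\tau-1)\bigl(1+\tau+\cdots+\tau^{\alpha-1}\bigr)$ and observing that on the associated graded of the socle filtration $\tau$ acts as the identity, the operator $1+\tau+\cdots+\tau^{\alpha-1}$ reduces to multiplication by $\alpha$. Hence, modulo lower terms in the socle filtration, $\sigma(\tau-1)^\nu e$ is a scalar multiple of $(\tau-1)^\nu \sigma e = \zeta_m^\lambda (\tau-1)^\nu e$, and each application of $\tau-1$ contributes an extra factor of $\alpha = \zeta_m^{a_0}$. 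Iterating $\nu$ times, $\sigma$ acts on $(\tau-1)^\nu e$ by $\zeta_m^{\lambda}\alpha^\nu = \zeta_m^{\lambda + a_0\nu}$. Taking $\nu = \kappa-1$ gives that $\sigma$ acts on the socle generator $(\tau-1)^{\kappa-1}e$ by $\zeta_m^{\lambda + a_0(\kappa-1)}$, i.e. the socle is $S_{(\lambda + a_0(\kappa-1))\bmod m}$. Combined with $\dim_k V_\alpha(\lambda,\kappa) = \kappa$ and the uniqueness of the indecomposable with prescribed socle and dimension, this yields $V_\alpha(\lambda,\kappa) = U_{(\lambda + a_0(\kappa-1))\bmod m,\ \kappa}$.

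For the dihedral specialization $m=2$, we have $\zeta_2 = -1$ and $\alpha = \zeta_2^{a_0}$; since $\alpha$ must be a unit mod $p$ and the conjugation action is $\tau\mapsto\tau^{-1}$, we get $\alpha\equiv -1$, hence $a_0 = 1$. Substituting $a_0 = 1$ and reducing mod $2$ gives $V_\alpha(\lambda,\kappa) = U_{(\lambda+\kappa-1)\bmod 2,\ \kappa}$, which is the second displayed formula.

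The only genuinely delicate point is the claim that $1+\tau+\cdots+\tau^{\alpha-1}$ acts as the scalar $\alpha$ on each graded piece of the socle filtration, or equivalently that passing $\sigma$ past $(\tau-1)^\nu$ introduces precisely the factor $\alpha^\nu$ up to strictly-lower-filtration corrections that do not affect the socle computation. I would make this rigorous by an induction on $\nu$ using the exact operator identity $\sigma(\tau-1) = (\tau-1)(1+\tau+\cdots+\tau^{\alpha-1})\sigma$ together with the fact that $(\tau-1)$ shifts the socle filtration strictly downward, so that when we finally land on the one-dimensional socle only the leading scalar survives; alternatively one can invoke the known classification in \cite{MR4130074} that the ascending composition factors of $U_{\ell,\mu}$ are $S_\ell, S_{\chi^{-1}(\ell)},\ldots$, read off the top factor, and match it against the character of $e$ computed the same way. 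Either route is short; the rest is bookkeeping with roots of unity modulo $m$.
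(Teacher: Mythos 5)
Your proposal is correct and follows essentially the same route as the paper: both compute the action of $\sigma$ on the socle generator $(\tau-1)^{\kappa-1}e$ via the relation $\sigma(\tau-1)=(\tau^\alpha-1)\sigma$, obtain the scalar $\zeta_m^{\lambda}\alpha^{\kappa-1}=\zeta_m^{\lambda+a_0(\kappa-1)}$, and conclude by uniqueness of the indecomposable with prescribed socle and dimension, with $a_0=1$ in the dihedral case. The only difference is that the paper cites its companion article for the triangular form of $\sigma e_i$, whereas you rederive it from the factorization $\tau^\alpha-1=(\tau-1)(1+\tau+\cdots+\tau^{\alpha-1})$, which is a harmless self-contained variant.
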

\begin{proof}
Indeed, in the $V_{\alpha}(\lambda,\kappa)$ notation we describe the action of $\sigma$ on the generator $e$, by assuming that  $\sigma e= \zeta_m^\lambda e$.
We can then describe the action on every basis element $e_i=(\tau-1)^{i-1}e$, using the group relations 
\[
  \sigma e_i =\sigma(\tau-1)^{i-1}e= (\tau^\alpha-1)^{i-1}\sigma e =
  \zeta_m^\lambda (\tau^\alpha-1)^{i-1} e
\]
This allows us to prove, see \cite[lemma 10]{MR4779377} that 
\[
  \sigma e_i =\alpha^{i-1} \zeta_m^\lambda 
  {e_i}
  + \sum_{\nu=i+1}^\kappa a_\nu e_\nu
\]
for some elements $a_\nu\in k$ and in particular
\[
  \sigma e_\kappa =  \alpha^{\kappa-1} \zeta_m^{\lambda}
  {e_\kappa }. 
\]
Recall that the number $\alpha=\zeta_m^{a_0}$ for some natural number $a_0$, $0\leq a_0 < m-1$, see also \cite[lemma 2]{MR4779377}. In the $U_{\mu,\kappa}$ notation, $\mu$ is the action on the one-dimensional socle which is the $\tau$-invariant element 
$e_\kappa=(\tau-1)^{\kappa-1}e$, i.e. $\sigma(e_\kappa)=\zeta_m^\mu$. Putting all this together we have 
\[
  \mu = \lambda + (\kappa-1)a_0 \mod m. 
\]
In the case of dihedral group $D_q$, $m=2$ and $\alpha=-1^{a_0}$, i.e.
$a_0=1$, we have $V_{\alpha}(\lambda,\kappa)=U_{\lambda+\kappa-1 \mod 2,\kappa}$. 
\end{proof}

Assume that $X\rightarrow \mathbb{P}^1$ is an HKG-cover with Galois group $C_q \rtimes C_m$. The  subgroup $I$ generated by the Sylow $p$-subgroups of the inertia groups of all closed points of $X$ is equal to $C_q$, {and the notation of section 4 in \cite{MR4130074} is simplified.} 
\begin{definition}
\label{define-Dj}
In \cite{MR4130074} for 
each $0\leq j \leq q-1$ the divisor 
\[
D_j=\sum_{y\in \mathbb{P}^1} d_{y,j} y,
\]
is defined,
where the integers $d_{y,j}$ are given as follows.
Let $x$ be a point of $X$ above  $y$ and consider the $i$-th ramification group $I_{x,i}$ at $x$. 
The order of the inertia group at $x$ is assumed to be $p^{n(x)}$ and  $i(x)=h-n(x)$ is defined. In this article we will have HKG-covers, where 
$n(x)=h$, so $i(x)=0$. We will use this in order to simplify the notation in what follows. 

Let $b_0,b_1,\ldots,b_{h-1}$ be the jumps in the numbering of the lower ramification filtration subgroups of $I_x$.
 We define
\[
d_{y,j}=
\lf
\frac{1}{p^{h}} \sum_{l=1}^{h}
p^{h-l} 
\big(
p-1+ (p-1-a_{l,t})b_{l-1}
\big)
\rf 
\]  
for all $j\geq 0$ 
with $p$-adic expansion 
\[
j=a_{1,j}+a_{2,j}p+ \cdots+ a_{h,j}p^{h-1}.
\]
 In particular $D_{q-1}=0$. Observe that $d_{y,j}\neq 0$ only for  wildly ramified branch points.   

\end{definition}


\begin{remark}
\label{remark-D}
For a divisor $D$ on a curve $Y$ define $\Omega_Y(D)=\Omega_Y\otimes \O_Y(D)$. In particular for  $Y=\mathbb{P}^1$, and for $D=D_j= d_{P_\infty,j} P_\infty$, where $D_j$ is a divisor supported at the infinity point $P_\infty$
 we have
\begin{align*}
H^0(\mathbb{P}^1,\Omega_{\mathbb{P}^1}(D_j))
&=\{ f(x) dx: 0 \leq \deg f(x) \leq d_{P_\infty,j}-2 \}.
\end{align*}
For the sake of simplicity, we will denote $d_{P_\infty,j}$ by $d_j$. The space $H^0(\mathbb{P}^1,\Omega_{\mathbb{P}^1}(D_j))$
 has a basis given by $B=\{dx,xdx,\ldots,x^{d_j-2}dx\}$. Therefore, 
the number $n_{j,\ell}$ of simple modules appearing in the decomposition $\Omega_{\mathbb{P}^1}(D_j)$ isomorphic to $S_\ell$ for $0\leq \ell < m$, is equal to the number of monomials $x^{\nu}$ with 
\[
\nu \equiv \ell-1 \mod m, 0\leq \nu \leq d_j-2.
\] 
If $d_j \leq 1$ then $B=\emptyset$ and $n_{j,\ell}=0$ for all $0\leq \ell <m$. If $d_j>1$, then we know that in the $d_j-1$ elements of the basis $B$, the first $m \lf \frac{d_j-1}{m} \rf$ elements  contribute to every representative modulo $m$. Thus, we have  at least $\lf \frac{d_j-1}{m} \rf$ elements in isomorphic to $S_\ell$ for every $0\leq \ell <m$. We will now count the rest elements, of the form $\{x^\nu dx\}$, where 
\[
  m \lf \frac{d_j-1}{m} \rf \leq \nu \leq d_j-2 \text{ and } \nu \equiv \overline{\ell-1} \mod m,
\]  
where  $\overline{\ell-1}$ is  the unique integer in $\{0,1,\ldots,m-1\}$ equivalent to $\ell-1$ modulo $m$. 
We observe that the number $y_j(\ell)$ of such elements $\nu$ is given by 
\[
  y_j(\ell) 
=
\begin{cases}
1 &  \text{ if } \overline{\ell-1} \leq d_j-2- m \lf \frac{d_j-1}{m} \rf
\\
0 & \text{ otherwise}
\end{cases}
\]
Therefore 
\[
n_{j,\ell}=
\begin{cases}
\lf \frac{d_j-1}{m}\rf +y_j(\ell) & \text{ if } d_j \geq 2
\\
0 & \text{ if } d_j \leq 1
\end{cases}
\]
For example if $d_j=9$ and $m=3$, then a basis for $H^0(\mathbb{P}^1,\Omega_{\mathbb{P}^1}(9P_\infty))$ is given by $\{dx,x dx,x^2dx,\ldots x^7dx\}$. This basis has $8$ elements, and each triple $\{dx,x dx,x^2 dx\}$, $\{x^3 dx,x^4 dx,x^5 dx\}$ contributes one to each class $S_0,S_1,S_2$, while there are two remaining basis elements $\{x^6dx,x^7dx\}$, which contribute one to $S_1,S_2$. Notice that $\lf \frac{8}{3} \rf=2$ and $y(\ell)=1$ for $\ell=1,2$.

In particular if $m=2$, then $n_{j,\ell}=0$ if $d_j\leq 1$ and for $d_j \geq 2$ we have
\begin{equation}
\label{eq-njl}
n_{j,\ell}=
\begin{cases}
\frac{d_j-1}{2} & \text{ if } d_j \equiv 1 \mod 2
\\
\frac{d_j}{2}-1 & \text{ if } \ell=0 \text{ and } d_j \equiv 0 \mod 2
\\
\frac{d_j}{2} & \text{ if } \ell=1 \text{ and } d_j \equiv 0 \mod 2
\end{cases}
\end{equation}
\end{remark}
\begin{lemma}
\label{lemma:njl}
Let $m=2$ and assume that $d_{j-1}=d_j+1$. Then if $d_{j} \geq 2$
\[
  n_{j-1,\ell}-n_{j,\ell}=
  \begin{cases}
  1  & \text{ if } d_{j-1}\equiv 1 \mod2 \text{ and }  \ell=0
  \\
    &  \text{ or } d_{j-1}\equiv 0 \mod 2 \text{ and } \ell=1
  \\
  0  & \text{ if } d_{j-1}\equiv 1 \mod2 \text{ and }  \ell=1
  \\
    &  \text{ or } d_{j-1}\equiv 0 \mod 2 \text{ and } \ell= 0
  \end{cases}
\]
If $d_{j} \leq 1$, then
\[
  n_{j-1,\ell}-n_{j,\ell}=
  \begin{cases}
  0 & \text{ if } d_{j}=0 \text{ or } (d_j=1 \text{ and } \ell=0) \\
  1 & \text{ if } d_j=1 \text{ and } \ell=1
  \end{cases}
\] 
\end{lemma}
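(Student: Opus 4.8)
The plan is to deduce the lemma directly from the closed formula for $n_{j,\ell}$ recorded in equation (\ref{eq-njl}) of Remark \ref{remark-D}, together with the convention $n_{j,\ell}=0$ for $d_j\leq 1$. Since the sole hypothesis relating the two indices is $d_{j-1}=d_j+1$, the parity of $d_{j-1}$ is opposite to that of $d_j$, and once we also fix whether $d_j\geq 2$ or $d_j\leq 1$, both $n_{j-1,\ell}$ and $n_{j,\ell}$ become explicit functions of $d_j$; the asserted difference is then a one-line arithmetic check in each branch.

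First I would handle the principal case $d_j\geq 2$, where $d_{j-1}=d_j+1\geq 3$ so that the three-way formula (\ref{eq-njl}) applies to both indices. If $d_{j-1}$ is odd (so $d_j$ even) I substitute $n_{j-1,0}=n_{j-1,1}=\tfrac{d_{j-1}-1}{2}=\tfrac{d_j}{2}$, $n_{j,0}=\tfrac{d_j}{2}-1$, $n_{j,1}=\tfrac{d_j}{2}$, and read off the differences $1$ for $\ell=0$ and $0$ for $\ell=1$. If $d_{j-1}$ is even (so $d_j$ odd) I substitute $n_{j-1,0}=\tfrac{d_{j-1}}{2}-1=\tfrac{d_j-1}{2}$, $n_{j-1,1}=\tfrac{d_{j-1}}{2}=\tfrac{d_j+1}{2}$, and $n_{j,0}=n_{j,1}=\tfrac{d_j-1}{2}$, giving differences $0$ for $\ell=0$ and $1$ for $\ell=1$. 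These are exactly the four lines of the first table.

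Finally I would dispose of the boundary case $d_j\leq 1$, where $n_{j,\ell}=0$. If $d_j=0$ then $d_{j-1}=1\leq 1$ as well, so $n_{j-1,\ell}=0$ and the difference is $0$. If $d_j=1$ then $d_{j-1}=2$ is even and $\geq 2$, so (\ref{eq-njl}) gives $n_{j-1,0}=0$ and $n_{j-1,1}=1$, hence differences $0$ and $1$ respectively; this is the second table. No genuine obstacle arises: the statement is pure bookkeeping on top of (\ref{eq-njl}), and the only point requiring attention is that in the transition between the two regimes $d_{j-1}$ may already lie in the range where (\ref{eq-njl}) applies while $d_j$ does not, which the case split above treats explicitly.
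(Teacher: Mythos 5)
Your proposal is correct and follows essentially the same route as the paper: both substitute the closed formula of eq.\ (\ref{eq-njl}) (with the convention $n_{j,\ell}=0$ for $d_j\leq 1$) into the difference $n_{j-1,\ell}-n_{j,\ell}$ and split into the parity cases for $d_{j-1}$ plus the boundary cases $d_j=0$ and $d_j=1$. Your arithmetic in each branch matches the paper's (and is in fact stated more cleanly than the paper's, which contains a small typo in the second parity case).
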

\begin{proof}
Assume that $d_j\geq 2$. 
We distinguish the following two cases, and we use eq. (\ref{eq-njl}) in each case
\begin{itemize}[leftmargin=10pt]
\item $d_{j-1}$ is odd and $d_{j}$ is even. Then, if $\ell=0$
\[
  n_{j-1,\ell}-n_{j,\ell}=\frac{d_{j-1}-1}{2}- \frac{d_{j}}{2}+1=1
\]
while $n_{j-1,\ell}-n_{j,\ell}=0$ if $\ell=1$. 
\item $d_{j-1}$ is even and $d_{j}$ is odd. Then, if $\ell=0$
\[
  n_{j-1,\ell}-n_{j,\ell}=\frac{d_{j-1}}{2}-1- \frac{d_{j}-1}{2}=0,
\]
while $n_{j-1,\ell}-n_{j,\ell}=1$ if $\ell=0$.
\end{itemize}
If now $d_j=0$ and $d_{j-1}=1$, then $n_{j-1,\ell}-n_{j,\ell}=0$. If $d_j=1$ and $d_{j-1}=2$ then $n_{j,\ell}=0$ while $n_{j-1,\ell}=0$ if $\ell=0$ and $n_{j-1,\ell}=1$ if $\ell=1$.
\end{proof}


\begin{theorem}
    \label{th:21}
Let $M=H^0(X,\Omega_X)$, and let $\tau$ be the generator of $C_q$. For all $0\leq j < q$ we define
$M^{(j)}$ to be the kernel of the action 
{of $(\tau-1)^j$.} 
For $0\leq a \leq m-1$ and $1\leq b \leq q=p^h$, let $n(a,b)$ be the number of indecomposable 
direct $k[C_q \rtimes C_m]$-module summands of $M$ that are isomorphic to $U_{a,b}$. 
Let $n_1(a,b)$ be the number of indecomposable direct $k[C_m]$-summands of $M^{(b)}/M^{(b-1)}$ with socle $S_{\chi^{-(b-1)}(a)}$ and dimension $1$. Let $n_2(a,b)$ be the number of indecomposable
direct $k[C_m]$-module summands of $M^{(b+1)}/M^{(b)}$ with socle $S_{\chi^{-b}(a)}$, where 
we set $n_2(a,b)=0$ if $b=q$. 
{hen, 
\begin{itemize}
\item
$n(a,b)=n_1(a,b)-n_2(a,b)$. 
\item The numbers $n_1(a,b),n_2(a,b)$ can be computed using the isomorphism 
\[
M^{(j+1)}/M^{(j)}\cong S_{\chi^{-j}} \otimes_k H^0(Y,\Omega_Y(D_j)), 
\]
\end{itemize} 
}
where $Y=X/C_q$ and $D_j$ are the divisors on $Y$, given in definition \ref{define-Dj}.

\end{theorem}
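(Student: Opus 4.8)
The plan is to reduce the statement to the Krull--Schmidt decomposition of $M$ into indecomposable $k[C_q\rtimes C_m]$-modules, together with a precise description of how the increasing filtration $0=M^{(0)}\subseteq M^{(1)}\subseteq\cdots\subseteq M^{(q)}=M$ by kernels of powers of $\tau-1$ sits inside each summand.

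\textbf{Step 1 (the filtration is $C_m$-equivariant).} Since $\sigma\tau\sigma^{-1}=\tau^{\alpha}$ with $(\alpha,p)=1$, the elements $\tau-1$ and $\tau^{\alpha}-1$ generate the same ideal of $k[C_q]$; hence $\sigma(\tau-1)^j\sigma^{-1}=(\tau^{\alpha}-1)^j$ lies in $(\tau-1)^jk[C_q]$ and conversely, so $\sigma M^{(j)}=M^{(j)}$ for all $j$ and each graded piece $M^{(b)}/M^{(b-1)}$ is a $k[C_m]$-module. As $(m,p)=1$ this module is semisimple, a direct sum of the one-dimensional simples $S_\ell$, so an ``indecomposable direct $k[C_m]$-summand of dimension $1$ with socle $S_\ell$'' is just a copy of $S_\ell$, and $n_1(a,b)$, $n_2(a,b)$ are precisely the multiplicities of $S_{\chi^{-(b-1)}(a)}$ and of $S_{\chi^{-b}(a)}$ in the respective graded pieces.

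\textbf{Step 2 (graded pieces of a single indecomposable).} Write $M\cong\bigoplus_{\ell,\mu}U_{\ell,\mu}^{\oplus n(\ell,\mu)}$, which is legitimate since every indecomposable $k[C_q\rtimes C_m]$-module is one of the uniserial modules $U_{\ell,\mu}$ recalled in Section \ref{sec:HolomorphicDifferentialSpecialFibre}. On $U_{\ell,\mu}$ the operator $\tau-1$ is nilpotent of exact index $\mu$ (the $C_q$-socle is the one-dimensional $S_\ell$, so $U_{\ell,\mu}\cong k[\tau-1]/(\tau-1)^{\mu}$ as $k[C_q]$-module), whence $\ker(\tau-1)^j$ meets $U_{\ell,\mu}$ in the unique submodule of dimension $\min(j,\mu)$. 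Its $b$-th graded piece is therefore nonzero exactly when $1\le b\le\mu$, and then it is the $b$-th ascending composition factor of $U_{\ell,\mu}$, namely $S_{\chi^{-(b-1)}(\ell)}$, where I would identify this factor as a $k[C_m]$-module using the twist relation $S_{\chi^{i}(\ell)}\cong S_\ell\otimes_k S_{\chi^{i}}$ of \eqref{eq:chi-prod}, exactly as in the list of ascending composition factors of $U_{\ell,\mu}$ in Section \ref{sec:HolomorphicDifferentialSpecialFibre}. Summing over the decomposition gives
\[
M^{(b)}/M^{(b-1)}\;\cong\;\bigoplus_{\ell=0}^{m-1}\ \bigoplus_{\mu\ge b}\,S_{\chi^{-(b-1)}(\ell)}^{\oplus n(\ell,\mu)};
\]
since $\ell\mapsto\chi^{-(b-1)}(\ell)$ is a bijection of $\{0,\dots,m-1\}$ with inverse $\chi^{b-1}$, the multiplicity of $S_{\chi^{-(b-1)}(a)}$ here is $n_1(a,b)=\sum_{\mu\ge b}n(a,\mu)$, and the same computation applied to $M^{(b+1)}/M^{(b)}$ gives $n_2(a,b)=\sum_{\mu\ge b+1}n(a,\mu)$ (consistent with $n_2(a,q)=0$ as $\mu\le q$). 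Subtracting yields the telescoping identity $n(a,b)=n_1(a,b)-n_2(a,b)$. Finally, to make $n_1,n_2$ explicit I would invoke \cite{MR4130074}: pushing forward along $\pi\colon X\to Y=X/C_q$ and using the relation of the different of $\pi$ to the lower ramification breaks $b_0,\dots,b_{h-1}$, one identifies the graded piece $M^{(j+1)}/M^{(j)}$ with $S_{\chi^{-j}}\otimes_k H^0(Y,\Omega_Y(D_j))$, the twist by $S_{\chi^{-j}}$ being exactly the one produced by $(\tau-1)^j$ in Step 2 and $D_j$ the divisor of Definition \ref{define-Dj}; feeding in the monomial basis of $H^0(\mathbb{P}^1,\Omega_{\mathbb{P}^1}(D_j))$ from Remark \ref{remark-D} then makes $n_1(a,b)$ and $n_2(a,b)$ completely explicit.

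The point I expect to be most delicate is Step 2: correctly bookkeeping which simple $k[C_m]$-module occurs as the $b$-th graded piece of a uniserial $U_{\ell,\mu}$ — that is, tracking the character twist introduced by each application of $\tau-1$ — and keeping the indices $\chi^{\pm(b-1)}(\ell)$ and $\chi^{-j}$ consistent throughout, since an off-by-one or a misplaced sign in the twist would corrupt the final identity. By contrast, the pushforward isomorphism in the last step is essentially quoted from \cite{MR4130074}, and the monomial count underlying Remark \ref{remark-D} has already been carried out.
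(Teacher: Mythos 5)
Your proposal is correct, but it is worth noting that it is more self-contained than what the paper actually does: the paper's entire proof of this theorem is a citation to \cite{MR4130074} (Remark 4.4), where both the counting identity and the geometric identification of the graded pieces are established. You instead reprove the combinatorial half yourself: you check that $M^{(j)}=\ker(\tau-1)^j$ is $C_m$-stable (correctly, since $\tau^{\alpha}-1$ and $\tau-1$ generate the same ideal of the local ring $k[C_q]$ when $(\alpha,p)=1$), observe that the graded pieces are semisimple $k[C_m]$-modules, and then use that each $U_{\ell,\mu}$ restricts to the Jordan block $k[C_q]/(\tau-1)^{\mu}$, so its $b$-th graded piece is exactly its $b$-th ascending composition factor $S_{\chi^{-(b-1)}(\ell)}$. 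This gives $n_1(a,b)=\sum_{\mu\ge b}n(a,\mu)$ and $n_2(a,b)=\sum_{\mu\ge b+1}n(a,\mu)$, whence the telescoping identity $n(a,b)=n_1(a,b)-n_2(a,b)$; your index bookkeeping agrees with the paper's conventions in eq. (\ref{eq:chi-prod}) and with the listed composition series of $U_{\ell,\mu}$. The only ingredient you quote, the isomorphism $M^{(j+1)}/M^{(j)}\cong S_{\chi^{-j}}\otimes_k H^0(Y,\Omega_Y(D_j))$ with $D_j$ as in Definition \ref{define-Dj}, is genuinely nontrivial (it encodes the ramification data of $X\to Y$) and cannot be obtained by the soft arguments above, so deferring it to \cite{MR4130074} is the right move — just be aware that this is where the real content lies, and that the explicit counts then follow from Remark \ref{remark-D} exactly as you say. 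In short: correct, with the same ultimate source as the paper, but with the elementary Krull--Schmidt/telescoping part worked out rather than cited.
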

\begin{proof}
This theorem is proved in \cite{MR4130074}, see remark 4.4.
\end{proof}


\begin{corollary}
\label{cor:6}
Using the notation of theorem \ref{th:21}, we set
\begin{equation}
    \label{eq:defdj}
d_j=\lf
\frac{1}{p^{h}} \sum_{l=1}^h
p^{h-l} 
(p-1+ (p-1-a_{l,t})b_{l-1})
\rf.
\end{equation}
The numbers $n(a,b)$, $n_1(a,b)$ and $n_2(a,b)$ defined in theorem \ref{th:21} are given by
\[
  n(a,b)=n_1(a,b)-n_2(a,b)=n_{b-1,a} -n_{b,a}.
\]
\end{corollary}
\begin{proof}
We treat the $n_1(a,b)$ case and the $n_2(a,b)$ follows similarly.
By {theorem \ref{th:21}} 
we have that 
\[
  M^{(b)}/M^{(b-1)} \cong S_{\chi^{-(b-1)}} \otimes_k H^0(\mathbb{P}^1,\Omega_{\mathbb{P}^1}(D_b)).
\]
The number of indecomposable $k[C_m]$-summands of $M^{(b)}/M^{(b-1)}$ isomorphic to 
$S_{ {\chi^{-(b-1)}(a)}}=S_{a -(b-1)a_0}$ equals to the number of indecomposable $k[C_m]$-summands of $H^0(\mathbb{P}^1,\Omega_{\mathbb{P}^1}(D_{b}))$ isomorphic to $S_a$, which is computed in remark \ref{remark-D}.
\end{proof}
In \cite[Th. 1.1]{MR2577662} A. Obus and R. Pries described the upper jumps in the ramification filtration of $C_{p^h} \rtimes C_m$-covers. 
\begin{theorem}
Let $G=C_{p^h} \rtimes C_m$, where $p\nmid m$. Let $m'= |\mathrm{Cent}_G(\sigma)|/p^h$, where $\langle \tau \rangle= C_{p^h}$. A sequence $u_1 \leq \cdots \leq u_n$ of rational numbers occurs as the set of positive breaks in the upper numbering of the ramification filtration of a $G$-Galois extension of $k( (t))$ if and only if:
\begin{enumerate}
\item $u_i \in \frac{1}{m}\mathbb{N}$ for $1\leq i \leq h$.
\item $\mathrm{gcd}(m,mu_1)=m'$.
\item $p\nmid mu_1$ and for $1 < i \leq h$, either $u_i=p u_{i-1}$ or both $u_i > p u_{i-1}$ and $p\nmid m u_i$.
\item $m u_i \equiv m u_1 \mod m$ for $1 \leq i \leq n$.
\end{enumerate}
\end{theorem}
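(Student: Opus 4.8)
The assertion is purely local, so the plan is to analyse a $G$-Galois extension $L/K$ of $K=k((t))$ with $G=C_{p^h}\rtimes C_m$ directly, factoring it through the fixed field $M:=L^{P}$ of the wild inertia $P$. Since $p\nmid m$, $P$ is the unique (hence normal) $p$-Sylow subgroup $C_{p^h}=\langle\tau\rangle$ of $G$; thus $M/K$ is tamely and totally ramified of degree $m$ with $\mathrm{Gal}(M/K)=C_m=\langle\sigma\rangle$, and $L/M$ is a wildly ramified $C_{p^h}$-extension of the local field $M$. For a suitable uniformizer $s$ of $M$ one has $t=s^{m}$ and $\sigma(s)=\zeta s$ for a primitive $m$-th root of unity $\zeta$, while $\mathrm{Gal}(M/K)$ acts on $\mathrm{Gal}(L/M)=C_{p^h}$ by conjugation in $G$, i.e.\ $\sigma\tau\sigma^{-1}=\tau^{\alpha}$.

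The first step is a Herbrand computation linking the two towers. For $u>0$ one has $\mathrm{Gal}(L/K)^{u}\subseteq P=\mathrm{Gal}(L/M)$; combining this with the facts that lower numbering restricts to subgroups, that $\psi_{M/K}(u)=mu$ for the tame extension $M/K$, and the transitivity formulas for $\psi$ and $\phi$, one gets $\mathrm{Gal}(L/K)^{u}=\mathrm{Gal}(L/M)^{mu}$ for all $u>0$. Writing $v_1<\cdots<v_h$ for the upper breaks of the cyclic $p$-extension $L/M$ (integers, by Hasse--Arf), it follows that the upper breaks of $L/K$ are exactly $u_i=v_i/m$. This gives condition (1), and turns conditions (3) and (4) into the corresponding statements for the integers $v_i=mu_i$.

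Condition (3) then is the classical description of which integer sequences $v_1<\cdots<v_h$ arise as the upper breaks of a $\mathbb{Z}/p^h$-extension of a local field of residue characteristic $p$ (via Artin--Schreier--Witt theory and the relations among ramification numbers): $p\nmid v_1$, and for $1<i\le h$ either $v_i=p\,v_{i-1}$ or $v_i>p\,v_{i-1}$ with $p\nmid v_i$. Conditions (2) and (4) encode the compatibility of the conjugation action of $C_m$ on $C_{p^h}$ with the ramification of $L/M$ --- equivalently, that $L$ is genuinely Galois over $K$. The mechanism I would use: $\mathrm{Gal}(L/M)^{v_i}=C_{p^{h-i+1}}$ and $\mathrm{Gal}(L/M)^{v_i+}=C_{p^{h-i}}$, so the $i$-th graded piece of the ramification filtration is the one-dimensional $\mathbb{F}_p$-space $C_{p^{h-i+1}}/C_{p^{h-i}}$, on which $\sigma$ acts, by conjugation, as multiplication by $\alpha\bmod p$ (uniformly in $i$); on the other hand, writing the level-$i$ $\mathbb{Z}/p$-slice by an Artin--Schreier equation $y^{p}-y=f_i$ with $f_i\in M$ of pole order $mu_i$ prime to $p$, and tracking how the substitution $s\mapsto\zeta s$ must be accompanied by a rescaling of $y$, shows that $\sigma$ acts on this graded piece as $\zeta^{\,v_i}$. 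Matching the two descriptions: at the bottom level $i=1$ (where the relevant quotient of $G$ is $C_p\rtimes C_m$ and the image $\chi(C_m)\subseteq\mathbb{F}_p^{\times}$ has order $m/m'$, with $m'$ measuring the non-faithfulness of the conjugation action) one can choose $\sigma$ so that $\sigma\tau\sigma^{-1}=\tau^{\alpha}$ precisely when $\gcd(m,v_1)=m'$, which is condition (2); and then, with $\sigma$ so fixed, the same compatibility at level $i>1$ becomes $\zeta^{v_i}=\zeta^{v_1}$, i.e.\ $v_i\equiv v_1\bmod m$, which is condition (4).

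For the converse, I would reverse the construction: take $M=k((s))$ with $s^{m}=t$ and the $C_m$-action $s\mapsto\zeta s$, then build an Artin--Schreier--Witt $C_{p^h}$-extension $L/M$ whose Witt-vector datum has pole orders realising prescribed breaks $v_i=mu_i$ compatible with condition (3) and transforms correctly under $s\mapsto\zeta s$ --- conditions (2) and (4) being exactly what permits such a $C_m$-equivariant choice --- so that $L/M$ descends and $L/K$ is $G$-Galois with the prescribed action and, by the Herbrand step, upper breaks $u_1\le\cdots\le u_h$. The hard part is the equivariance comparison: making rigorous, and uniform in $i$, the identification of the $\sigma$-action on the $i$-th graded piece of the ramification filtration of $L/M$ with $\zeta^{\,v_i}$, which requires careful control of upper versus lower numbering through the non-abelian tower, of the ``fundamental character'' of the tame subextension $M/K$, and --- as in the original proof of Obus and Pries --- an input from local class field theory (or an explicit Artin--Schreier--Witt computation); the converse then has to run this identification backwards while preserving $C_m$-equivariance. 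The complete argument is in \cite{MR2577662}.
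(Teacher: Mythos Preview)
The paper does not prove this theorem at all: it is stated there purely as a quotation of \cite[Th.~1.1]{MR2577662} (Obus--Pries), with no argument given beyond the citation. Your proposal therefore already goes well beyond what the paper does, and since you close by pointing to the same reference for the complete proof, there is no discrepancy to flag.

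As for the sketch itself, the strategy you outline --- factoring through the tame subextension $M=L^{C_{p^h}}$, using Herbrand/transitivity to get $G^{u}=\mathrm{Gal}(L/M)^{mu}$ and hence $u_i=v_i/m$, invoking the standard constraints on upper breaks of a $\mathbb{Z}/p^h$-extension for condition~(3), and reading conditions~(2) and~(4) as the compatibility between the conjugation $C_m$-action on the graded pieces $C_{p^{h-i+1}}/C_{p^{h-i}}$ and the ``Artin--Schreier character'' $\zeta^{v_i}$ --- is indeed the shape of the Obus--Pries argument. One small caution: in the paper's statement the definition of $m'$ reads $|\mathrm{Cent}_G(\sigma)|/p^h$, but the line immediately following the theorem (``in our setting $\mathrm{Cent}_G(\tau)=\langle\tau\rangle$, therefore $m'=1$'') makes clear that $\tau$ is intended; your interpretation via the size of the image $\chi(C_m)\subset\mathbb{F}_p^\times$ is the correct one.
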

{We will now describe the lower  $b_0, \ldots , b_{h-1}$ and upper $w_0, \ldots , w_{h-1}$ for the case of a cyclic group action.  
}
Notice that in our setting $\mathrm{Cent}_G(\tau)=\langle \tau \rangle$, therefore $m'=1$. Also the set of upper jumps of $C_{p^h}$ is given by $w_1= mu_1,\ldots, w_h=m u_h, w_i \in \mathbb{N}$, see \cite[lemma 3.5]{MR2577662}. 

The theorem of Hasse-Arf \cite[p. 77]{SeL} applied for cyclic groups, implies that there are strictly positive integers $\iota_0,\iota_1,\ldots,\iota_{h-1}$ such that  
\[
b_s=\sum_{\nu=0}^{s-1} \iota_\nu p^\nu, \text{ for } 0 \leq s \leq h-1.
\]
 Also, 
the upper jumps for the $C_q$ extension are given by 
\begin{equation}
\label{eq-w-upper}
  w_0=i_0-1, w_1=i_0+i_1-1,\ldots,w_h=i_0+i_1+\cdots+u_h-1.
\end{equation}
Assume that for all $0 < \nu \leq h-1$ we have $w_{\nu}= p w_{\nu-1}$. 
Equation (\ref{eq-w-upper}) implies that 
\[
i_1=(p-1)w_0, i_2=(p-1)pw_0,i_3=(p-1)p^2w_0,\ldots, u_{h-1}=(p-1)p^{h-2}w_0.
\]
Therefore, 
\begin{align*}
  b_\ell+1 &=\sum_{\nu=0}^{\ell} i_\nu p^\nu
  \\
  &=1 +w_0 +(p-1)w_0 \cdot p + (p-1)p w_0 \cdot p^2
  \cdots+ (p-1)p^{\ell-1}w_0 \cdot p^\ell
\\
&=
1+u_0 + p(p-1)u_0 
\left(
\sum_{\nu=0}^{\ell-1} 
p^{2 \nu}
\right)
=
1+ w_0 +  p(p-1)w_0 \frac{p^{2 \ell}-1}{p^2-1}
\\
&=
1+w_0+ pw_0 \frac{p^{2 \ell}-1}{p+1}
=
1+ w_0\frac{p^{2 \ell+1}+1}{p+1},
\end{align*}
where we have used that $w_0=b_0=i_0-1$.

\subsection{Examples}
\label{sec:no-lift}

Consider the curve with lower jumps $1,21,521$ and higher jumps $1,5,25$, acted on by $C_{125} \rtimes C_4$. According to eq. (\ref{a-modq}), the only possible values for $\alpha$ are $1,57,68,124$. The value $\alpha=1$ gives rise to a cyclic group $G$, while the value $\alpha=124$ has order $2$ modulo $125$. The values $57,68$ have order $4$ modulo $125$. The cyclic group $\mathbb{F}_5^*$ is generated by the primitive root $2$ of order $4$. We have that $57\equiv 2 \mod5$, while $68\equiv 3 \equiv 2^3 \mod5$. 
We have thus two choices for the $\zeta_4$, namely $2\mod 5$ as well as $\zeta_4^3= 2^3=3 \mod 5$. In remark \ref{remark-D} we have considered the $m$-th root of unity such that $\sigma(x) = \zeta_m x$, where $x$ is the generating variable for the function field of the curve $X^{C_q}$. 
On the other hand the curve 
$X^{\langle \tau^{p^{h-1}}\rangle}$ 
is a $C_p \rtimes C_m$ cover of $\mathbb{P}^1$ with cyclic Galois group generated by $\tilde{\tau }=\tau \mod \langle \tau^{p^{h-1}} \rangle$ 
and 
\[
  \sigma \tilde{\tau } \sigma^{-1} = \tilde{\tau}^\alpha 
\]
According to \cite[lemma 1.4.1]{Pries:02} it has the following model:
\[
  x^m=u, y^p-y=f(x).
\]
and if the action of $\sigma$ on $x$ is given by 
$\sigma(x)=\zeta_m x$, then $\alpha=\zeta_m^{-j} \mod m$, where $j=\deg(f)$ and equals to the first upper jump $u_0$ for $C_p$. 

 Assume that $\alpha=\zeta_m^{-1}$, that is $u_0\equiv 1 \mod m$. We will now use theorem \ref{cond-lift} in order to show that this  action does not lift.
Using corollary \ref{cor:6} together with remark \ref{remark-D} we have that $H^0(X,\Omega_X)$ is decomposed into the following indecomposable modules, each one appearing with multiplicity one:

$
\begin{array}{c}
U_{ 0 , 5 },\ 
U_{ 3 , 11 },\ 
U_{ 2 , 17 },\ 
U_{ 1 , 23 }, \
U_{ 0 , 29 },\ 
U_{ 3 , 35 },\ 
U_{ 2 , 41 },\ 
U_{ 1 , 47 },\ 
U_{ 0 , 53 },\ 
U_{ 3 , 59 },\ 
\\
U_{ 2 , 65 },\ 
U_{ 1 , 71 },\ 
U_{ 0 , 77 },\ 
U_{ 3 , 83 },\ 
U_{ 2 , 89 },\ 
U_{ 1 , 95 },\ 
U_{ 0 , 101 },\ 
U_{ 3 , 107 },\ 
U_{ 2 , 113 },\ 
U_{ 1 , 119 }.
\end{array} 
$
\\
{
For all $U_{l,\kappa}$ listed above we have 
$ l \not \equiv 0\mod 4$ so the module $U_{l,\kappa}$ can not be lifted by itself. 
We will now examine possible matchings of modules. Recall that we are looking for sets  
$U_{l_1,\kappa_1}, U_{l_1,\kappa_2},\ldots , 
U_{l_t,\kappa_t}$ so that $\kappa_1+ \cdots +\kappa_t \leq p^h$.
Trying to satisfy only the dimension criterion we see that  only possible matchings are $\{U_{1,119},U_{0,5}\}$,
$\{U_{2,113},U_{3,11}\}$, $\{U_{3,107}, U_{2,17}\}$ etc.
We thus have the following vertical matchings for the modules:

\centerline{
\begin{tabular}{|c|c|c|c|c|c|c|c|c|c|}
        \hline
  $U_{ 0 , 5 }  $ &
  $U_{ 3 , 11 } $ &
  $U_{ 2 , 17 } $ &
  $U_{ 1 , 23 } $ &
  $U_{ 0 , 29 } $ &
  $U_{ 3 , 35 } $ &
  $U_{ 2 , 41 } $ &
  $U_{ 1 , 47 } $ &
  $U_{ 0 , 53 } $ &
  $U_{ 3 , 59 } $     
        \\
        \hline
        $U_{ 1 , 119 } $ & 
        $U_{ 2 , 113 } $ &
$U_{ 3 , 107 } $ &
$U_{ 0 , 101 } $ &
$U_{ 1 , 95 } $  &
$U_{ 2 , 89 } $  &
$U_{ 3 , 83 } $  &
$U_{ 0 , 77 } $  &
$U_{ 1 , 71 } $  &
 $U_{ 2 , 65 } $  
        \\
        \hline
\end{tabular}
}

Observe that the sum of all dimensions vertically are 
$124=5^3-1$.
The pair 
$\{U_{0,5}, U_{1,119}\}$
does not satisfy criterion \ref{cond-liftc} of theorem \ref{cond-lift}. Indeed, for $l_1=0, \kappa _1=5$ and $l_2=1, \kappa_2=119$ we have $\alpha=\zeta_4^{-1}$ and 
$l_1+\kappa_1 (-1) = 3 \neq l_2=1 \mod 4 $ and also 
$l_2 + \kappa_2(-1)=2 \neq l_1=0 \mod 4$.
Therefore, the action cannot be lifted. 


\begin{remark}
Note that the only possible choice for $\alpha$ other than $\zeta_4^{-1}$ is $\alpha=\zeta_4$. According to \cite[lemma 1.4.1]{Pries:02}, this forces the first lower jump to be congruent to $-1$ modulo $4$. Equivalently, as noted in remark \ref{remark:HKB}, the KGB obstruction vanishes. At the same time, the criterion \ref{cond-liftc} of theorem \ref{cond-lift} is satisfied for all pairs, and therefore the representation can be lifted.
\end{remark}

The above  example has non-vanishing KGB obstruction, see remark \ref{remark:HKB}, so our criterion does not give something new here.
The case of dihedral groups, in which the KGB-obstruction is always vanishing, is more difficult to find an example that does not lift. 

Let us now consider the case of dihedral groups $D_{125}=C_{125} \rtimes C_2$, that is $m=2$ and assume that the  lower jumps are $1,21,521$ and higher jumps are $1,5,25$. The set of indecomposable modules  is given by the following table:

\centerline{
\begin{tabular}{|c|c|c|c|c|c|c|c|c|c|}
        \hline
  $U_{ 0 , 5 }  $ &
  $U_{ 1 , 11 } $ &
  $U_{ 0 , 17 } $ &
  $U_{ 1 , 23 } $ &
  $U_{ 0 , 29 } $ &
  $U_{ 1 , 35 } $ &
  $U_{ 0 , 41 } $ &
  $U_{ 1 , 47 } $ &
  $U_{ 0 , 53 } $ &
  $U_{ 1 , 59 } $     
        \\
        \hline
        $U_{ 1 , 119 } $ & 
        $U_{ 0 , 113 } $ &
$U_{ 1 , 107 } $ &
$U_{ 0 , 101 } $ &
$U_{ 1 , 95 } $  &
$U_{ 0 , 89 } $  &
$U_{ 1 , 83 } $  &
$U_{ 0 , 77 } $  &
$U_{ 1 , 71 } $  &
 $U_{ 0 , 65 } $  
        \\
        \hline
\end{tabular}
}
which is exactly the set of indecomposable groups $U_{l, \kappa}$ for $C_{125} \rtimes C_4$ but $l$ is reduced modulo $2$. Now $\alpha =-1$, that is $a_0=1$ and for all vertical 
pairs of  modules $U_{l_1,\kappa_1}, U_{l_2,\kappa_2}$ we have that $l_1+ \kappa_1=l_2$, that is criterion \ref{cond-liftc} of theorem \ref{cond-lift} is satisfied.
This indicates that, in this liftable case, our criterion is consistent with the vanishing of the KGB-obstruction.

Let us now give an example of dihedral group which does not lift. 
The HKG-cover with lower jumps $9, 9\cdot21=189,9\cdot 521=4689$ has genus $11656$ and the following modules appear in its decomposition, each one appearing with multiplicity one:
\[
\tiny{
\begin{array}{l}
U_{ 0 , 1 },
U_{ 1 , 1 },
U_{ 0 , 2 },
U_{ 1 , 2 },
U_{ 1 , 3 },
U_{ 0 , 4 },
U_{ 1 , 4 },
U_{ 0 , 5 },
U_{ 1 , 6 },
U_{ 0 , 7 },
U_{ 1 , 7 },
U_{ 0 , 8 },
U_{ 1 , 8 },
U_{ 0 , 9 },
U_{ 1 , 9 },
U_{ 0 , 11 },
U_{ 1 , 11 },
U_{ 0 , 12 },
\\
U_{ 1 , 12 },
U_{ 0 , 13 },
U_{ 1 , 13 },
U_{ 0 , 14 },
U_{ 1 , 15 },
U_{ 0 , 16 },
U_{ 0 , 17 },
U_{ 1 , 17 },
U_{ 0 , 18 },
U_{ 1 , 18 },
U_{ 0 , 19 },
U_{ 1 , 19 },
U_{ 0 , 21 },
U_{ 1 , 21 },
U_{ 0 , 22 },
U_{ 1 , 22 },
\\
U_{ 0 , 23 },
U_{ 1 , 23 },
U_{ 1 , 24 },
U_{ 0 , 25 },
U_{ 1 , 26 },
U_{ 0 , 27 },
U_{ 1 , 27 },
U_{ 0 , 28 },
U_{ 1 , 28 },
U_{ 0 , 29 },
U_{ 1 , 29 },
U_{ 0 , 31 },
U_{ 1 , 31 },
U_{ 0 , 32 },
U_{ 1 , 32 },
U_{ 0 , 33 },
\\
U_{ 0 , 34 },
U_{ 1 , 34 },
U_{ 1 , 35 },
U_{ 0 , 36 },
U_{ 0 , 37 },
U_{ 1 , 37 },
U_{ 0 , 38 },
U_{ 1 , 38 },
U_{ 0 , 39 },
U_{ 1 , 39 },
U_{ 0 , 41 },
U_{ 1 , 41 },
U_{ 0 , 42 },
U_{ 1 , 42 },
U_{ 0 , 43 },
U_{ 1 , 43 },
\\
U_{ 1 , 44 },
U_{ 0 , 45 },
U_{ 0 , 46 },
U_{ 1 , 46 },
U_{ 1 , 47 },
U_{ 0 , 48 },
U_{ 1 , 48 },
U_{ 0 , 49 },
U_{ 1 , 49 },
U_{ 0 , 51 },
U_{ 1 , 51 },
U_{ 0 , 52 },
U_{ 1 , 52 },
U_{ 0 , 53 },
U_{ 0 , 54 },
U_{ 1 , 54 },
\\
U_{ 1 , 55 },
U_{ 0 , 56 },
U_{ 0 , 57 },
U_{ 1 , 57 },
U_{ 0 , 58 },
U_{ 1 , 58 },
U_{ 0 , 59 },
U_{ 1 , 59 },
U_{ 0 , 61 },
U_{ 1 , 61 },
U_{ 0 , 62 },
U_{ 1 , 62 },
U_{ 0 , 63 },
U_{ 1 , 63 },
U_{ 1 , 64 },
U_{ 0 , 65 },
\\
U_{ 0 , 66 },
U_{ 1 , 66 },
U_{ 1 , 67 },
U_{ 0 , 68 },
U_{ 1 , 68 },
U_{ 0 , 69 },
U_{ 1 , 69 },
U_{ 0 , 71 },
U_{ 1 , 71 },
U_{ 0 , 72 },
U_{ 1 , 72 },
U_{ 0 , 73 },
U_{ 1 , 73 },
U_{ 0 , 74 },
U_{ 1 , 75 },
U_{ 0 , 76 },
\\
U_{ 0 , 77 },
U_{ 1 , 77 },
U_{ 0 , 78 },
U_{ 1 , 78 },
U_{ 0 , 79 },
U_{ 1 , 79 },
U_{ 0 , 81 },
U_{ 1 , 81 },
U_{ 0 , 82 },
U_{ 1 , 82 },
U_{ 0 , 83 },
U_{ 1 , 83 },
U_{ 1 , 84 },
U_{ 0 , 85 },
U_{ 1 , 86 },
U_{ 0 , 87 },
\\
U_{ 1 , 87 },
U_{ 0 , 88 },
U_{ 1 , 88 },
U_{ 0 , 89 },
U_{ 1 , 89 },
U_{ 0 , 91 },
U_{ 1 , 91 },
U_{ 0 , 92 },
U_{ 1 , 92 },
U_{ 0 , 93 },
U_{ 1 , 93 },
U_{ 0 , 94 },
U_{ 1 , 95 },
U_{ 0 , 96 },
U_{ 1 , 96 },
U_{ 0 , 97 },
\\
U_{ 0 , 98 },
U_{ 1 , 98 },
U_{ 0 , 99 },
U_{ 1 , 99 },
U_{ 0 , 101 },
U_{ 1 , 101 },
U_{ 0 , 102 },
U_{ 1 , 102 },
U_{ 1 , 103 },
U_{ 0 , 104 },
U_{ 1 , 104 },
U_{ 0 , 105 },
U_{ 1 , 106 },
U_{ 0 , 107 },
\\
U_{ 1 , 107 },
U_{ 0 , 108 },
U_{ 1 , 108 },
U_{ 0 , 109 },
U_{ 1 , 109 },
U_{ 0 , 111 },
U_{ 1 , 111 },
U_{ 0 , 112 },
U_{ 1 , 112 },
U_{ 0 , 113 },
U_{ 1 , 113 },
U_{ 0 , 114 },
U_{ 1 , 115 },
U_{ 0 , 116 },
\\
U_{ 1 , 116 },
U_{ 0 , 117 },
U_{ 0 , 118 },
U_{ 1 , 118 },
U_{ 0 , 119 },
U_{ 1 , 119 },
U_{ 0 , 121 },
U_{ 1 , 121 },
U_{ 0 , 122 },
U_{ 1 , 122 },
U_{ 0 , 123 },
U_{ 1 , 123 },
U_{ 1 , 124 }.
\end{array}
}
\]
The above formulas were computed using Sage 9.8 \cite{sagemath9.8}. In order to be completely sure that the computations are correct we will compute the values we need by hand also. 
We have
\begin{align*}
  d_j &=\lf \frac{1}{125}\left( 5^2
  \big(4 +(4-a_1) 9  \big)
  +
  5\big(4 +(4-a_2) 189 \big)
  + 
  \big(4+(4-a_3)4689 \big) \right) \rf
  \\
  &= \lf
\frac{1}{125} \left( 
23560 - 225 a_1 - 945 a_2 - 4689 a_3
\right)
   \rf
\end{align*}
\[
\def\arraystretch{1.15}
\begin{array}{c|c|c|c|c|c|c}
j & p\!-\!\text{adic} & d_j & n_{j,0} & n_{j,1} & n_{j-1,0}-n_{j,0} & n_{j-1,1}-n_{j,1}
\\
\hline
0 & 0,0,0& \lf \frac{23560}{125} \rf=188 & 93 & 94 & - & -
\\
1 & 1,0,0& \lf \frac{23335}{125} \rf=186 & 92 & 93 & 1 & 1
\\
2 & {2},0,0& \lf \frac{23110}{125} \rf=184 & 91 & 92 & 1 & 1
\\
3 & {3},0,0& \lf \frac{22885}{125} \rf=183 &  91 & 91 & 0 & 1
\\
4 & {4},0,0& \lf \frac{22660}{125} \rf=181 &  90 & 90 & 1 & 1 
\\
5 & 0,1,0& \lf \frac{22615}{125} \rf=180 & 89 & 90 & 1 & 0
\\ 
6 & 1,1,0 & \lf \frac{22390}{125} \rf=179 & 89 & 89 & 0 & 1
\\
\vdots & \vdots & \vdots & \vdots & \vdots & \vdots & \vdots 
\\
120 & 0,4,4 & \lf \frac{1024}{125} \rf=8 & 3 & 4 &
\\
121 & 1,4,4 & \lf \frac{799}{125} \rf=6 & 2 & 3  & 1 & 1
\\ 
122 & 2,4,4 & \lf \frac{574}{125} \rf=4 & 1 & 2 & 1 & 1
\\ 
123 & 3,4,4 & \lf \frac{349}{125} \rf=2 & 0 & 1 & 1 & 1
\\
124 & 4,4,4 & \lf \frac{124}{125} \rf=0 & 0 & 0 & 0 & 1
\end{array}
\]

Notice that $U_{1,123},U_{0,123}$ can be paired with $U_{1,0},U_{1,1}$, and then for  $U_{0,121}$, $U_{1,121}$ there is only one $U_{1,3}$ to be paired with. The lift is not possible. 


\bibliographystyle{plainurl}

\begin{thebibliography}{10}

\bibitem{MR770932}
E.~Arbarello, M.~Cornalba, P.~A. Griffiths, and J.~Harris.
\newblock {\em Geometry of algebraic curves. {V}ol. {I}}, volume 267 of {\em
  Grundlehren der Mathematischen Wissenschaften}.
\newblock Springer-Verlag, New York, 1985.

\bibitem{BertinCRAS}
Jos{\'e} Bertin.
\newblock Obstructions locales au rel\`evement de rev\^etements galoisiens de
  courbes lisses.
\newblock {\em C. R. Acad. Sci. Paris S\'er. I Math.}, 326(1):55--58, 1998.

\bibitem{MR4130074}
Frauke~M. Bleher, Ted Chinburg, and Aristides Kontogeorgis.
\newblock Galois structure of the holomorphic differentials of curves.
\newblock {\em J. Number Theory}, 216:1--68, 2020.

\bibitem{BourbakiComm}
Nicolas Bourbaki.
\newblock {\em Commutative {A}lgebra. {C}hapters 1--7}.
\newblock Elements of Mathematics (Berlin). Springer-Verlag, Berlin, 1989.

\bibitem{MR2254623}
Irene~I. Bouw and Stefan Wewers.
\newblock The local lifting problem for dihedral groups.
\newblock {\em Duke Math. J.}, 134(3):421--452, 2006.

\bibitem{BraStich86}
Rolf Brandt and Henning Stichtenoth.
\newblock Die {A}utomorphismengruppen hyperelliptischer {K}urven.
\newblock {\em Manuscripta Math.}, 55(1):83--92, 1986.

\bibitem{MR2441248}
T.~Chinburg, R.~Guralnick, and D.~Harbater.
\newblock Oort groups and lifting problems.
\newblock {\em Compos. Math.}, 144(4):849--866, 2008.

\bibitem{MR2919977}
Ted Chinburg, Robert Guralnick, and David Harbater.
\newblock The local lifting problem for actions of finite groups on curves.
\newblock {\em Ann. Sci. \'{E}c. Norm. Sup\'{e}r. (4)}, 44(4):537--605, 2011.

\bibitem{MR3591155}
Ted Chinburg, Robert Guralnick, and David Harbater.
\newblock Global {O}ort groups.
\newblock {\em J. Algebra}, 473:374--396, 2017.

\bibitem{MR4450616}
Huy Dang, Soumyadip Das, Kostas Karagiannis, Andrew Obus, and Vaidehee Thatte.
\newblock Local {O}ort groups and the isolated differential data criterion.
\newblock {\em J. Th\'{e}or. Nombres Bordeaux}, 34(1):251--269, 2022.

\bibitem{Farkas-Kra}
Hershel~M. Farkas and Irwin Kra.
\newblock {\em Riemann surfaces}, volume~71 of {\em Graduate Texts in
  Mathematics}.
\newblock Springer-Verlag, New York, 1980.

\bibitem{MatignonGreen98}
Barry Green and Michel Matignon.
\newblock Liftings of {G}alois covers of smooth curves.
\newblock {\em Compositio Math.}, 113(3):237--272, 1998.

\bibitem{Hartshorne:77}
Robin Hartshorne.
\newblock {\em Algebraic {G}eometry}.
\newblock Springer-Verlag, New York, 1977.
\newblock Graduate Texts in Mathematics, No. 52.

\bibitem{KatoDuke87}
Kazuya Kato.
\newblock Vanishing cycles, ramification of valuations, and class field theory.
\newblock {\em Duke Math. J.}, 55(3):629--659, 1987.
ss
\bibitem{KaMa}
Nicholas~M. Katz and Barry Mazur.
\newblock {\em Arithmetic moduli of elliptic curves}.
\newblock Princeton University Press, Princeton, NJ, 1985.

\bibitem{2101.11084}
Aristides Kontogeorgis and Alexios Terezakis.
\newblock The canonical ideal and the deformation theory of curves with
  automorphisms,
\newblock arXiv 2101.11084 2025 v5 \url{https://arxiv.org/abs/2101.11084}
\newblock Journal of Pure and Applied  Algebra (Accepted)



\bibitem{MR4779377}
Aristides Kontogeorgis and Alexios Terezakis.
\newblock On the lifting problem of representations of a metacyclic group.
\newblock {\em J. Algebra}, 659:745--779, 2024.

\bibitem{MR4194180}
Aristides Kontogeorgis and Ioannis Tsouknidas Ioannis.
\newblock A generating set for the canonical ideal of HKG-curves
\newblock {\em Res. Number Theory} 2021.

\bibitem{LiuBook}
Qing Liu.
\newblock {\em Algebraic geometry and arithmetic curves}, volume~6 of {\em
  Oxford Graduate Texts in Mathematics}.
\newblock Oxford University Press, Oxford, 2002.
\newblock Translated from the French by Reinie Ern\'e, Oxford Science
  Publications.

\bibitem{Obus12}
Andrew Obus.
\newblock The (local) lifting problem for curves.
\newblock In {\em Galois-{T}eichm\"uller theory and arithmetic geometry},
  volume~63 of {\em Adv. Stud. Pure Math.}, pages 359--412. Math. Soc. Japan,
  Tokyo, 2012.

\bibitem{MR3556796}
Andrew Obus.
\newblock The local lifting problem for {$A_4$}.
\newblock {\em Algebra Number Theory}, 10(8):1683--1693, 2016.

\bibitem{Obus2017}
Andrew Obus.
\newblock A generalization of the {O}ort conjecture.
\newblock {\em Comment. Math. Helv.}, 92(3):551--620, 2017.

\bibitem{MR2577662}
Andrew Obus and Rachel Pries.
\newblock Wild tame-by-cyclic extensions.
\newblock {\em J. Pure Appl. Algebra}, 214(5):565--573, 2010.

\bibitem{ObusWewers}
Andrew Obus and Stefan Wewers.
\newblock Cyclic extensions and the local lifting problem.
\newblock {\em Ann. of Math. (2)}, 180(1):233--284, 2014.

\bibitem{PagotPhd}
Guillaume Pagot.
\newblock {\em Relèvement en caractéristique zéro d'actions de groupes
  abéliens de type $(p,\ldots,p)$}.
\newblock PhD thesis, Bordeaux Univeristy, 2002.

\bibitem{MR3194816}
Florian Pop.
\newblock The {O}ort conjecture on lifting covers of curves.
\newblock {\em Ann. of Math. (2)}, 180(1):285--322, 2014.

\bibitem{Pries:02}
Rachel~J. Pries.
\newblock Families of wildly ramified covers of curves.
\newblock {\em Amer. J. Math.}, 124(4):737--768, 2002.

\bibitem{Saint-Donat73}
B.~Saint-Donat.
\newblock On {P}etri's analysis of the linear system of quadrics through a
  canonical curve.
\newblock {\em Math. Ann.}, 206:157--175, 1973.

\bibitem{SeL}
Jean-Pierre Serre.
\newblock {\em Local fields}.
\newblock Springer-Verlag, New York, 1979.
\newblock Translated from the French by Marvin Jay Greenberg.
  
\bibitem{sagemath9.8}
{The Sage Developers}.
\newblock {\em {S}ageMath, the {S}age {M}athematics {S}oftware {S}ystem
  ({V}ersion 9.8)}, 2023.
\newblock {\tt https://www.sagemath.org}.

\bibitem{CodeSage}
{Sage 9.8 Code for computing the Galois module structure of $H^0(X,\Omega_X)$}
\newblock 
  \url{https://www.dropbox.com/sh/uo0dg91l0vuqulr/AACarhRxsru_zuIp5ogLvy6va?dl=0}

\bibitem{Val-Mad:80}
Robert~C. Valentini and Manohar~L. Madan.
\newblock A {H}auptsatz of {L}. {E}. {D}ickson and {A}rtin-{S}chreier
  extensions.
\newblock {\em J. Reine Angew. Math.}, 318:156--177, 1980.

\bibitem{MR3874854}
Bradley Weaver.
\newblock The local lifting problem for {$D_4$}.
\newblock {\em Israel J. Math.}, 228(2):587--626, 2018.

\end{thebibliography}

 \def\cprime{$'$}

\end{document}